\newtheorem{defi}{Definition}[section]
\newtheorem{theo}{Theorem}[section]
\newtheorem{prop}{Proposition}[section]
\newtheorem{lem}{Lemma}[section]
\newtheorem{rk}{Remark}[section]
\newtheorem{exa}{Example}[section]
\numberwithin{equation}{section}
\def\A{{\cal{A}}}
\def\R{{\mathbb{R}}}
\def\calE{{\cal{E}}}
\def\calL{{\cal{L}}}
\newcommand{\ui}{{u_{\infty}}}
\newcommand{\Wa}{W^{\alpha/2,2}}
\newcommand{\Wo}{W_0^{\alpha/2,2}(\Omega)}
\newcommand{\lam}{\lambda}
\newcommand{\Om}{\Omega}
\newcommand{\alp}{\alpha}
\begin{document}
\bibliographystyle{alpha}

\title{ The heat equation for the  Dirichlet fractional Laplacian with negative potentials:
Existence and blow-up of nonnegative solutions}

\author{\normalsize   Ali BenAmor\footnote{corresponding author} \footnote{Department of Mathematics, Faculty
of Sciences of Gab\`es. Uni.Gab\`es, Tunisia. E-mail: ali.benamor@ipeit.rnu.tn}
\& Tarek Kenzizi\footnote {Department of Mathematics, Faculty of Sciences of Tunis, Uni. Tunis ElManar, Tunisia }
}

\date{}
\maketitle
\begin{abstract} We establish conditions ensuring either existence or blow-up of nonnegative solutions
for the heat equation generated by the  Dirichlet fractional Laplacian perturbed by negative
potentials  on bounded sets. The elaborated theory is supplied by some examples.
\end{abstract}
{\bf Key words}: fractional Laplacian, heat equation, Dirichlet form.\\
{\bf MSC2010}: 35K05, 35B09, 35S11.

\section{Introduction}
In this paper, we discuss the question of  existence as well as blow-up  of nonnegative
solutions for negatively perturbed Dirichlet fractional Laplacian on open bounded subsets of $\R^d$.\\
For every $0<\alpha<\min(2,d)$ and every open bounded subset $\Om\subset\R^d$, we designate
by  $L_0:=(-\Delta)^{\frac{\alpha}{2}}|_\Omega$ the fractional Laplacian with zero Dirichlet
condition on $\Omega^c$ (as explained in the next section). We consider the associated perturbed heat
(or parabolic) equation
\begin{eqnarray}
\label{heat1}
\left\{\begin{gathered}
-\frac{\partial u}{\partial t}=L_0u - Vu,
\quad \hbox{in } (0,T)\times\Omega,\\
u(t,\cdot)=0,\ on~~~\Omega^c,\ \forall\,0<t<T\leq\infty\\
u(0,\cdot)= u_{0},~~~{\rm } \in \Omega,
\end{gathered}
\right.
\end{eqnarray}
where $u_{0}\geq 0$ is a Borel measurable square integrable  function on $\Om$ and $V\in L^1_{loc}(\Om)$ is a positive
Borel function. The meaning of a solution for the equation (\ref{heat1}) will be explained in the next section.\\
We emphasize that the potential $V$ is not supposed to be in the (generalized) Kato class, so that the
standard perturbation theory of Dirichlet forms does not help any more to decide wether a nonnegative solution occurs or not.\\
Even for the special case of a  Hardy potential
\begin{eqnarray}
V(x)=\frac{c}{|x|^\alpha},\ x\neq 0,\ c>0 
\end{eqnarray}
only partial information about the problem are established in the literature, to our best knowledge. Indeed,
if $0\in\Omega$ one derives from the paper of Beldi--Belhaj Rhouma--BenAmor \cite{benamor-JPA}, that
for every $0<c\leq c^*:=\frac{2^\alpha\Gamma^2(\frac{d+\alpha}{4})}{ \Gamma^2(\frac{d-\alpha}{4})}$, a nonnegative
solution exists. However, there is no answer for $c>c^*$.\\
Our main task in this paper is to shed some light towards solving the problem by giving conditions ensuring existence
as well as blow up of nonnegative solutions for (\ref{heat1}).\\
Focusing on nonnegative solutions is motivated, among
other reasons, by the fact that they are physical solutions on one hand and on the other one by  the
significance of the above considered operators in physics and in other area of natural sciences. For instance, the
case $\alpha=1$ corresponds to the nonrelativistic Schr\"odinger equation, whereas the general case models the
so called L\'evy motion or L\'evy flights. For more about aspects of applications of the fractional Laplacian we
refer the reader to \cite{dubkov}. A prototype of application in biology is illustrated in \cite{nicolas}.\\
The inspiring point for us was the papers of Baras--Goldstein \cite{baras-goldstein}, Cabr\'e--Martel
\cite{cabre-martel} and  Goldstein--Zhang \cite{goldstein-zhang} where the problem was addressed and solved for
the Dirichlet Laplacian (i.e. $\alpha=2$) on Lipschitz domains. In the latter cited papers, the authors proved, in particular, that existence and nonexistence  of nonnegative solutions in the case where the principal part of the equation is
the Dirichlet Laplacian (or an elliptic operator) is related to the size of the bottom of the spectrum of the
operator $-\Delta|_\Omega-V$. However, there is a substantial difference between the Laplacian and the fractional Laplacian. Whereas it is known that the first one is local and therefore suitable for describing diffusions, the
second one is nonlocal and commonly used for describing superdiffusions (L\'evy flights). These differences are
reflected in the way of computing for both operators (Green formula, integration by part, Leibnitz formula....).\\
Nonetheless, we shall show that the method used in \cite{cabre-martel,goldstein-zhang} still apply in our
setting. Especially, for the instantaneous blow-up  part, which is the major novelty of this paper, adequate generalization of the intermediate results to the nonlocal case are established (see in particular, formula (\ref{en-in}) and Theorem (\ref{Log-Sol})). These extensions make it possible to carry over the method to the nonlocal setting.\\
As a conclusion we approve that the used method provides a unified approach for handling the local as well as
the nonlocal case.

\section{Preparing results}
To state our main results, it is convenient to introduce the following notations.\\
From now on we fix an open bounded subset $\Omega\subset\R^d$ with Lipschitz boundary, a real number $\alpha$ such that
$0<\alpha<\min(2,d)$.\\
The Lebesgue spaces $L^2(\R^d,dx)$, resp. $L^2(\Omega,dx)$ will be denoted by $L^2$, resp. $L^2(\Omega)$ and their respective norms will be denoted by $\|\cdot\|_{L^2}$, resp. $\|\cdot\|_{L^2(\Om)}$ .
We shall write $\int\cdots$ as a shorthand for $\int_{\R^d}\cdots$.\\
The letters  $C$, $c$, $C^{'}$ will denote generic positive constants
which may vary in value from line to line. We shall also use the notation $f\sim g$ to mean that there are  constants $c,c'$ such that $cg\leq f\leq c'g$.\\
As far as concepts related to Dirichlet forms we refer the reader to \cite{fukushima-book}.\\
Consider the bilinear symmetric form $\calE$ defined in $L^2$ by
\begin{eqnarray}
\calE(f,g)&=&\frac{1}{2}{\A} (d,\alpha)\int \int \frac{(f(x)-f(y))(g(x)-g(y))}
{|x-y|^{d+\alpha}}\,dxdy,\nonumber\\
D(\calE)&=&W^{\alpha/2,2}(\R^d)
:=\{f\in L^2\colon\,\calE[f]:=\calE(f,f)<\infty\},\,
\label{formula1}
\end{eqnarray}
where
\begin{eqnarray}
{\A}{(d,\alpha)}=\frac{\alpha\Gamma(\frac{d+\alpha}{2})}
{2^{1-\alpha}\pi^{d/2}\Gamma(1-\frac{\alpha}{2})}.
\label{analfa}
\end{eqnarray}
Using Fourier transform $\hat f(\xi)=(2\pi)^{-d/2}\int e^{-ix\cdot\xi}f(x)\,dx$, a straightforward computation yields the following identity
(see \cite[Lemma 3.1]{frank})
\begin{eqnarray}
\int |\xi|^\alpha|\hat f(\xi)|^2\,d\xi=\calE[f],\ \forall\,f\in W^{\alpha/2,2}(\R^d).
\label{form-fourier}
\end{eqnarray}
It is well known that $\calE$ is a transient Dirichlet form and  is related (via Kato representation theorem) to
the selfadjoint
operator, commonly named the fractional Laplacian on  $\R^d$, and which we shall denote by  $(-\Delta)^{\alpha/2}$. We note that the domain of $(-\Delta)^{\alpha/2}$ is the fractional Sobolev space  $W^{\alpha,2}(\R^d)$. Having formula (\ref{form-fourier}) in hands one can explicitly evaluate the fractional Laplacian:
\begin{eqnarray}
(-\Delta)^{\alpha/2}f(x)=\mathcal{F}^{-1}\big(|\cdot|^{\alpha}\hat f\big)(x),\ \forall\,f\in W^{\alpha,2}(\R^d).
\label{operator1}
\end{eqnarray}
%
%
%
For an alternative and very interesting construction of the fractional Laplacian as a boundary operator we refer the reader to the paper \cite{Caf-Syl}.\\
%
%
%
Set  $L_0:=(-\Delta)^{\alpha/2}|_\Om$ the localization of  $(-\Delta)^{\alpha/2}$ on $\Om$, i.e., the operator which Dirichlet form in $L^2(\overline\Om,dx)$ is given by
\begin{eqnarray*}
D(\calE_\Om)&=&W_0^{\alpha/2,2}(\Om)\colon=\{f\in W^{\alpha/2,2}(\R^d)\colon\, f=0 ~~~q. e.~on~\Om^c\}\nonumber\\
\calE_\Om(f,g)&=&\calE(f,g)\nonumber\\
&=&\frac{1}{2}\A{(d,\alpha)}\int_\Om\int_\Om \frac{(f(x)-f(y))(g(x)-g(y))}{|x-y|^{d+\alpha}}\,dx\,dy
+\int_\Om f(x)g(x)\kappa_\Om(x)\,dx,
\end{eqnarray*}
where
\begin{eqnarray}
\kappa_\Om(x):=\A(d,\alpha)\int_{\Om^c}\frac{1}{|x-y|^{d+\alpha}}\,dy,
\end{eqnarray}
and the notation q.e. means quasi-everywhere with respect to the $\calE$-capacity.\\
The Dirichlet form $\calE_\Om$ is  regular, transient and the associated semigroup is irreducible even if $\Om$ is not connected.\\
Let $p_t(x,y),\ t>0,\ x,y\in\Om$ be the heat kernel of the semigroup $e^{-tL_0}$. It is known (see for instance \cite{bogdan-heatKernel}) that $p$ is jointly continuous on $(0,\infty)\times\Om\times\Om$ and
\begin{eqnarray}
0<p_t(x,y)=p_t(y,x)\leq\frac{C}{t^{d/\alpha}},\ \forall\,t>0,\ x,y\in\Om.
\label{FreeKern}
\end{eqnarray}
For every   $0<T\leq\infty$, we set $C_c^\infty\big([0,T)\times\Om\big)$  the usual space of infinitely differentiable functions on $[0,T)\times\Om$, having
compact support in $[0,T)\times\Om$.\\
We are in position at this stage to give  the notion of solution for the heat equation (\ref{heat1}).
\begin{defi}\label{sol-def}~~~~~~~~~~~~~~~~~~~~~~~~~~~~\\
{\rm Let $V\in L^1_{loc}(\Om)$ be nonnegative, $u_0\in L^2(\Om)$ be nonnegative as well and $0<T\leq\infty$.  We say that a Borel measurable function $u:[0,T)\times\R^d\to\R$ is a solution of the heat equation
\begin{eqnarray}
\label{heat2}
\left\{\begin{gathered}
-\frac{\partial u}{\partial t}=L_0 u - Vu,
\quad \hbox{in } (0,T)\times\Omega,\\
u(t,\cdot)=0,\ on~~~\Omega^c,\ \forall\,0<t<T\leq\infty\\
u(0,x)= u_{0}(x),~~~{\rm for}\ a.e.\ x\in \Omega,
\end{gathered}
\right.
\end{eqnarray}
if
\begin{enumerate}
\item $u\in\calL_{loc}^2\big([0,T), L_{loc}^2(\Om)\big)$, where $\calL^2$ is the Lebesgue space of square integrable functions.
\item $u\in L^{1}_{loc}\big((0,T)\times \Om,dt\otimes V\,dx\big)$.
\item For every $t>0$, $u(t,\cdot)= 0,\ a.e.$ on $\Om^c$.
\item For every $0\leq t< T$ and every Borel function $\phi:[0,T)\times\R^d$ such that $supp\,\phi\subset [0,T)\times\Om$, $\phi,\ \frac{\partial \phi}{\partial t}\in L^2((0,T)\times\Om)$, $\phi(t,\cdot)\in D(L_0),\ \forall\,t$ and
$$\int_0^t\int_\Om u(s,x)L_0\phi(s,x)\,ds\,dx<\infty$$
the following identity holds true
\begin{eqnarray}
\int_\Om \big((u\phi)(t,x)-u_0(x)\phi(0,x)\big)\,dx &+&\int_{0}^{t}\int_\Om
u(s,x)(-\phi_{s}(s,x)+L_0\phi(s,x))\,dx\,ds\nonumber\\
&=&\int_{0}^{t}\int_\Om u(s,x)\phi(s,x)V(x)\,dx\,ds.
\label{variational}
\end{eqnarray}
\end{enumerate}
}
\end{defi}
A function $\phi$ as indicated  in Definition \ref{sol-def} will be called a test function.\\
Henceforth our main task is to demonstrate that existence and blow-up of nonnegative solutions of the mentioned heat equation is deeply related to the size of
\begin{eqnarray}
 \displaystyle\lambda_{0}^{V}:=\inf_{\phi\in C_c^\infty(\Om)\setminus\{0\}}
 \frac{\calE_{\Omega}[\phi]-\int \phi^{2}\,V\,dx}{\int \phi^{2}\,dx}.
\end{eqnarray}
To that we shall need some preparing results.
\begin{prop}
Assume that $V\in L^\infty(\Om)$. Set $L_V$ the selfadjoint operator associated to the closed quadratic form
\begin{eqnarray}
\calE^V: D(\calE^V)=\Wo,\ \calE^V[f]=\calE_{\Om}[f] - \int_\Om f^2V\,dx,
\end{eqnarray}
in $L^2(\Om)$ and  $u(t):=e^{-tL_V}u_0,\ t\geq 0$. Then
\begin{enumerate}
\item $u(t)$ is a nonnegative global solution of problem (\ref{heat1}). Furthermore it satisfies Duhamel's formula
\begin{eqnarray}
u(t)= e^{-tL_0}u_0 + \int_0^t e^{-(t-s)L_0}(u(s)V)\,ds,\ \forall\,t\geq 0.
\label{Duha}
\end{eqnarray}
\item For every $t>0$, $\frac{\partial u}{\partial t}\in L^\infty(\Om)$ and
\begin{eqnarray}
\|\frac{\partial u}{\partial t} \|_{L^\infty(\Om)}\leq \frac{C}{t^{\frac{d}{2\alpha}}}\big(\frac{2}{t}+\|V\|_{L^{\infty}(\Om)}\big)\|u_0\|_{L^2(\Om)}.
\label{DerivEst}
\end{eqnarray}
\end{enumerate}
\label{existence1}
\end{prop}
\begin{proof}
Obviously $\calE_\Om$ is closed and hence we associate to it in a unique manner a selfadjoint operator which we denote by $L_V$. From the relationship between forms and semigroups we infer that $e^{-tL_V},\ t\geq 0$ is strongly continuous and we learn from \cite[Theorem 1.24, p.492]{kato} that $e^{-tL_V},\ t>0$ is holomorphic. Thereby   $u(t)=e^{-tL_V}u_0,\ t\geq 0$ enjoys the following properties
\begin{eqnarray}
u(t)\in D(L_V)=D(L_0),\ u\in C\big([0,\infty),L^2(\Om)\big)\cap C^1\big((0,\infty),L^2(\Om)\big),
\end{eqnarray}
and by \cite[Remark 1.21, p.492]{kato} $u(t)$ solves the equation (in the classical sense)
\begin{eqnarray}
\left\{\begin{gathered}
-\frac{\partial u}{\partial t}=L_0u - Vu,\
\quad \hbox{ }\ \forall\,t>0,\ a.e.\ x\in\Omega,\\
u(t,\cdot)=0,\ on~~~\Omega^c,\ \forall\,0<t<T\leq\infty\\
u(0,x)= u_{0}(x),~~~{\rm for}\ a.e.\ x\in \Omega,
\end{gathered}
\right.
\end{eqnarray}
Thus multiplying by a test function and integrating  we obtain that $u(t)$ is a solution of the heat equation in the sense of Definition \ref{sol-def}.\\
The fact that $u$ satisfies Duhamel formula is well known fact from the semigroup theory.\\
It remains to prove that $u$ is in fact nonnegative.\\
We observe that for every $f\in \Wo,\ |f|\in\Wo$ and since $\calE_\Om$ is a Dirichlet form it holds that $\calE^V[|f|]\leq \calE^V[f]$. Thus the semigroup $e^{-tL_V},\ t\geq 0$ is positivity preserving. Finally making use of Duhamel formula and recalling that $p_t>0\ on\ \Om\times\Om$ we conclude that $u(t)$ is nonnegative.\\
Let us prove the second assertion. Let $t>0$. Then
\begin{eqnarray}
\frac{\partial u}{\partial t}=-L_Ve^{-tL_V}u_0=-e^{-t/2L_V}L_Ve^{-t/2L_V}u_0.
\end{eqnarray}
Making use of Feynman-Kac formula we obtain that $e^{-tL_V},\ t>0$ maps continuously $L^2(\Om)$ into $L^\infty(\Om)$ together with the estimate
\begin{eqnarray}
\|e^{-t/2L_V}\|_{L^2,L^\infty}\leq \frac{C}{t^{d/{2\alpha}}}e^{t/4\|V\|_{L^\infty}}.
\end{eqnarray}
We thereby achieve
\begin{eqnarray}
\|\frac{\partial u}{\partial t}\|_{L^\infty(\Om)}\leq \|e^{-t/2L_V}\|_{L^2,L^\infty}\|L_Ve^{-t/2L_V}\|\|u_0\|_{L^2(\Om)}.
\end{eqnarray}
Making use of the spectral theorem we obtain $\|L_Ve^{-t/2L_V}\|\leq \frac{2}{t}+\|V\|_{L^{\infty}(\Om)}$. Finally putting all together we obtain the upper bound (\ref{DerivEst}).

\end{proof}
From now on, we set  $V_k:=V\wedge k$ and we denote by $(P_k)$ the heat equation corresponding to the Dirichlet fractional Laplacian perturbed by $-V_k$ instead of  $-V$:
\begin{eqnarray}
\label{heat-app}
(P_k)\colon\left\{\begin{gathered}
-\frac{\partial u}{\partial t}=L_0u - V_k u,
\quad \hbox{in } (0,T)\times\Omega,\\
u(t,\cdot)=0,\ on~~~\Omega^c,\ \forall\,0<t<T\leq\infty,\\
u(0,\cdot)= u_{0},~~~{\rm }\ \in \Omega,
\end{gathered}
\right.
\end{eqnarray}
Denote by  $L_k$  the selfadjoint operator associated to the closed quadratic form  $\calE_\Om-V_k$  and $u_k(t):=e^{-tL_k}u_0,\ t\geq 0$ the solution of problem $(P_k)$ given by Proposition \ref{existence1}. Then
$u_k$ satisfies Duhamel's formula:
\begin{eqnarray}
u_k(t,x)&=&e^{-tL_0}u_0(x)+\int_0^t\int_\Om p_{t-s}(x,y)u_k(s,x)V_k(y)\,dy\,ds,\ \forall\,t>0
\label{duhamel}
\end{eqnarray}
where $p_t,\ t>0$ is the heat kernel of the operator $e^{-tL_0}$.\\
The following  properties of the sequence $(u_k)$ are crucial for the later development of the paper.
\begin{lem}
\begin{itemize}
\item[i)] The sequence $(u_k)$ is increasing.
\item[ii)] If problem (\ref{heat1}) has a nonnegative  solution $u$ then $u_k\leq u,\ \forall\,k$. Moreover $\lim_{k\to\infty}u_k$ is a nonnegative solution of problem (\ref{heat1}) as well.
\end{itemize}
\label{domination}
\end{lem}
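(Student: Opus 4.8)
The plan is to exploit monotonicity of the potentials $V_k = V\wedge k$ together with the comparison/positivity-preserving structure of the perturbed semigroups, using Duhamel's formula (\ref{duhamel}) as the main working tool.

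\emph{Part (i).} First I would fix $k\le k'$, so that $0\le V_k\le V_{k'}$ pointwise. Both $u_k(t)=e^{-tL_k}u_0$ and $u_{k'}(t)=e^{-tL_{k'}}u_0$ are nonnegative (by Proposition \ref{existence1} applied to the bounded potentials $V_k,V_{k'}$) and satisfy Duhamel's formula with kernel $p_t$ of $e^{-tL_0}$. Subtracting the two Duhamel identities,
\begin{eqnarray*}
u_{k'}(t,x)-u_k(t,x)&=&\int_0^t\int_\Om p_{t-s}(x,y)\big(u_{k'}(s,y)V_{k'}(y)-u_k(s,y)V_k(y)\big)\,dy\,ds\\
&\ge&\int_0^t\int_\Om p_{t-s}(x,y)V_k(y)\big(u_{k'}(s,y)-u_k(s,y)\big)\,dy\,ds,
\end{eqnarray*}
where I used $V_{k'}\ge V_k\ge0$ and $u_{k'}\ge0$. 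Writing $w=u_{k'}-u_k$, this says $w(t)\ge \int_0^t e^{-(t-s)L_0}\big(V_k\,w(s)\big)\,ds$, i.e. $w$ is a supersolution of the linear problem with bounded potential $V_k$. A standard Gronwall/iteration argument on the integral inequality — iterating the estimate and using $\|V_k\|_\infty\le k$ together with the contractivity of $e^{-tL_0}$ on $L^\infty$ (or $L^2$) — forces $w\ge0$ on $[0,T)$. Hence $u_k\le u_{k'}$, proving (i). Alternatively, and perhaps more cleanly, one can observe that $u_k$ itself is a subsolution for $(P_{k'})$: indeed $-\partial_t u_k = L_0 u_k - V_k u_k \le L_0 u_k - V_{k'}u_k$ since $u_k\ge0$ and $V_k\le V_{k'}$, and then invoke the comparison principle for $(P_{k'})$ (which follows from positivity-preservation of $e^{-tL_{k'}}$). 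I would present whichever of these the paper's earlier machinery supports most directly.

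\emph{Part (ii), domination.} Suppose $u\ge0$ solves (\ref{heat1}). The key point is that $u$ is a supersolution of each $(P_k)$: since $0\le V_k\le V$ and $u\ge0$, we have $-\partial_t u = L_0 u - Vu \le L_0 u - V_k u$ in the variational sense of Definition \ref{sol-def}. I would make this rigorous by testing the variational identity (\ref{variational}) for $u$ against nonnegative $\phi\in C_c^\infty([0,T)\times\calO)$ and comparing with the corresponding identity for $u_k$ (which, by the proof of Proposition \ref{existence1}, holds on every $\calO\sub\sub\Om$); the difference $u-u_k$ then satisfies the variational inequality associated with the bounded potential $V_k$, with zero initial data and zero boundary values. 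Since $L_k$ generates a positivity-preserving semigroup, the comparison principle gives $u-u_k\ge0$, i.e. $u_k\le u$ for all $k$. The technical care needed here — passing from the $C_c^\infty(\calO)$ test functions to a genuine comparison statement for the (bounded-potential, hence well-behaved) operator $L_k$, and justifying the parabolic maximum principle in the fractional, nonlocal setting — is where I expect the real work to lie; one route is to regularize in time, use that $u_k$ is an energy solution lying in $\Wo\cap L^\infty$, and integrate by parts against $(u_k-u)^+$.

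\emph{Part (ii), the limit.} By (i) and the domination just proved, $u_k\uparrow u_\infty:=\lim_k u_k$ pointwise a.e., with $0\le u_\infty\le u$; in particular $u_\infty\in\calL^2_{loc}((0,T),L^2_{loc}(\Om))$ and, since $u\in L^1_{loc}(dt\otimes V\,dx)$ and $u_k V_k\le u V$, monotone convergence gives $u_\infty\in L^1_{loc}((0,T)\times\Om,dt\otimes V\,dx)$ and $u_k V_k\to u_\infty V$ in $L^1_{loc}$. Properties (1)–(3) of Definition \ref{sol-def} are therefore inherited. For property (4), I would pass to the limit in the variational identity (\ref{variational}) written for $u_k$ (with $V_k$): the terms $\int_\calO (u_k\phi)(t,\cdot)$, $\int_\calO u_0\phi(0,\cdot)$ converge by monotone (or dominated) convergence using the $L^2(\calO)$ bound $u_k\le u$; the term $\int_0^t\int_\calO u_k(-\phi_s + L_0\phi)$ converges by dominated convergence, since $|{-\phi_s+L_0\phi}|$ is bounded on $[0,T)\times\calO$ (here $L_0\phi = (-\Delta)^{\alpha/2}\phi$ on $\calO$, which is bounded for $\phi\in C_c^\infty$, exactly as in the Remark following Definition \ref{sol-def}) and $u_k\le u\in\calL^2_{loc}$; and the right-hand side $\int_0^t\int_\calO u_k\phi V_k \to \int_0^t\int_\calO u_\infty\phi V$ by monotone convergence ($u_k\phi V_k$ is monotone when $\phi\ge0$, and the general case follows by splitting $\phi=\phi^+-\phi^-$). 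Thus $u_\infty$ satisfies (\ref{variational}) on every $\calO\sub\sub\Om$, hence is a nonnegative solution of (\ref{heat1}). The main obstacle, as noted, is the rigorous comparison argument in part (ii); once that is in place the limit passage is routine bookkeeping with monotone and dominated convergence.
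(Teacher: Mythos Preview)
Your treatment of Part (i) and of the limit passage in Part (ii) is correct and essentially matches the paper, though for (i) the paper is slightly slicker: instead of subtracting the two $L_0$-based Duhamel formulas and then running a Gronwall argument, it uses the Duhamel identity comparing the two \emph{perturbed} semigroups directly,
\[
u_{k+1}(t)-u_k(t)=\int_0^t e^{-(t-s)L_k}\big(V_{k+1}-V_k\big)e^{-sL_{k+1}}u_0\,ds\ge 0,
\]
which yields monotonicity in one line from positivity-preservation of $e^{-tL_k}$. Your argument would also go through; the paper's just avoids the iteration.

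The real issue is the domination step in Part (ii). You correctly write down the variational inequality satisfied by $u-u_k$ against $C_c^\infty$ test functions, but your proposed way to extract $u_k\le u$ from it --- ``integrate by parts against $(u_k-u)^+$'' --- does not work under the standing hypotheses. The solution $u$ in Definition~\ref{sol-def} is only assumed to lie in $\calL^2_{loc}\big((0,T),L^2_{loc}(\Om)\big)$; it has no $\Wo$-regularity whatsoever, so $(u_k-u)^+$ is not an admissible test function and you cannot form $\calE_\Om(u,\cdot)$. Regularizing in time does not help with the spatial regularity deficit.

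The paper resolves this by a \emph{duality} argument (following Goldstein--Goldstein--Rhandi): rather than putting regularity on $u$, one manufactures a smooth enough nonnegative test function. Given any $0\le\psi\in C_c^\infty\big((0,t)\times\calO\big)$, one solves the forward problem
\[
-\partial_s\phi=-L_0^{\calO}\phi+V_k\phi+\psi\ \text{in }(0,t)\times\calO,\qquad \phi(0,\cdot)=0,
\]
explicitly by $\phi(s)=\int_0^s e^{-(s-\xi)(L_0^{\calO}-V_k)}\psi(\xi)\,d\xi\ge 0$; this $\phi$ lies in (the completion $H_t$ of) the admissible test space. Plugging it into the variational inequality collapses everything to $\int_0^t\int_{\calO}(u_k-u)\psi\,ds\,dx\le 0$ for all such $\psi$, hence $u_k\le u$. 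This is the missing idea: shift the burden of regularity to the dual problem, which is well-posed because $V_k$ is bounded.
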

\begin{proof}
i) By Duhamel's formula, one has
\begin{eqnarray}
u_{k+1}(t)-u_k(t)&=&e^{-tL_{k+1}}u_0-e^{-tL_{k}}u_0=
\int_0^te^{-(t-s)L_{k}}e^{-sL_{k+1}}(u_0V_{k+1}-u_0V_k)(s)\,ds\nonumber\\
&\geq& 0.
\end{eqnarray}
ii) We follow an idea of Goldstein--Goldstein--Rhandi \cite[Prop.4.1]{rhandi}. Let $u$ be as stated in the lemma,  $0<t<T$ be fixed and  $\phi$ be positive test function  such that $Supp\,\phi\subset [0,t]\times\Om$.\\
From the definition of a solution  we infer
\begin{eqnarray}
\int_0^t \int_\Om (u_k(s)-u(s))(-\phi_s(s)+L_0\phi(s)-V_k\phi(s))\,ds\,dx&=&\int _0^t \int_\Om u\phi(V_k-V)\,ds\,dx\nonumber\\
&\leq& 0.
\label{negative1}
\end{eqnarray}
Let $\psi\in C_c^\infty\big((0,t)\times\Om\big) $ be nonnegative and consider the parabolic problem: find a positive test function $\phi$ solving the equation
\begin{eqnarray}
-\frac{\partial\phi}{\partial s} = -L_0\phi + V_k\phi +\psi\ {\rm in}\ (0,t)\times\Om,\ \phi(t,\cdot)=0.
\label{S1}
\end{eqnarray}
Then the latter problem has a positive solution which is given by (see \cite[Theorem 1.27, p.493]{kato})
\begin{eqnarray}
 \phi(s)= \int_0^{t-s} e^{-(t-s-\xi) (L_0-V_k)}\psi(t-\xi)\,d\xi,\ 0\leq s\leq t,\ \phi(s)=0,\ \forall\,s>t,
\end{eqnarray}
Plugging into equation (\ref{negative1}) yields
\begin{eqnarray}
\int_0^t \int_\Om (u_k-u)\psi\,ds\,dx\leq 0,\ \forall\, 0\leq\psi\in C_c^\infty\big((0,t)\times\Om\big).
\end{eqnarray}
As $t$ is arbitrary we obtain  $u_k\leq u$.\\
Let us prove that the limit of the sequence $(u_k)$ is a nonnegative solution.\\
Set $\ui:=\lim_{k\to\infty}u_k$. Obviously $\ui(t,\cdot)=0\ a.e.$ on $\Om^c$ for every $t\in(0,T)$. On the
other hand we have by the first step of  (ii), $0\leq\ui\leq u$ and therefore
$$
\ui\in \calL_{loc}^2\big((0,T),L_{loc}^2(\Omega)\big)\cap
L^1_{loc}\big([0,T)\times\Omega,dt\otimes V\,dx\big).
$$
Being solution of the heat equation $(P_k)$, the $u_k$'s satisfy: for every $0\leq t<T$, and every test function $\phi$,
\begin{eqnarray}
\int_\Om \big((u_k\phi)(t,x)-u_0(x)\phi(0,x)\big)\,dx +\int_{0}^{t}\int_\Om
u_k(s,x)\big(-\phi_{s}(s,x)+L_0\phi(s,x)\big)\,dx\,ds\nonumber\\
=\int_{0}^{t}\int_\Om u_k(s,x)\phi(s,x)V_k(x)\,dx\,ds.
\end{eqnarray}
By dominated convergence theorem we conclude that $\ui$ satisfies equation (\ref{variational}) as well, which ends the proof.
\end{proof}
The following lemma is inspired from the 'gradient' case where integration by parts is used. It extends without major difficulties for abstract nonlocal Dirichlet forms.
\begin{lem} Let $u_k$ be the nonnegative solution of the approximate problem $(P_k)$ and $\phi\in\Wa_0(\Om)\cap L^\infty$
which  support lies in $\Om$. Then $\frac{\phi^2}{u_{k}}\in\Wa_0(\Omega)$ and
\begin{eqnarray}
\calE_{\Omega}(u_{k},\frac{\phi^{2}}{u_{k}})\leq \calE_{\Omega}[\phi].
\label{en-in}
\end{eqnarray}
\label{enregy-comprison}
\end{lem}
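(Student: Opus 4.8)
The plan is to mimic the classical "gradient" argument for $-\Delta$, where one writes $\nabla u\cdot\nabla(\phi^2/u)=2(\phi/u)\nabla u\cdot\nabla\phi-(\phi/u)^2|\nabla u|^2$ and uses Cauchy--Schwarz pointwise. The nonlocal analogue replaces the gradient $\nabla u$ by the difference $u(x)-u(y)$ and the inner product by the symmetric bilinear form under the double integral. Concretely, I would first record the elementary algebraic identity, valid for positive reals $a=u_k(x)$, $b=u_k(y)$ and reals $p=\phi(x)$, $q=\phi(y)$,
\begin{eqnarray}
(a-b)\Big(\frac{p^{2}}{a}-\frac{q^{2}}{b}\Big)
=(p-q)^{2}-\Big(\frac{p}{\sqrt a}\sqrt b-\frac{q}{\sqrt b}\sqrt a\Big)^{2}
\leq (p-q)^{2}.
\label{key-alg}
\end{eqnarray}
One checks \eqref{key-alg} by expanding both sides; the point is that the correction term is a perfect square, hence nonnegative, which is exactly the nonlocal replacement of the $-(\phi/u)^2|\nabla u|^2$ term. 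Multiplying by $\tfrac12\A(d,\alpha)|x-y|^{-d-\alpha}$ and integrating over $\Om\times\Om$ gives \eqref{en-in}, \emph{provided} all the integrals make sense and $\phi^2/u_k\in\Wo$.

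The membership $\phi^2/u_k\in\Wo$ is the step I expect to be the main obstacle, because $u_k$ need not be bounded below by a positive constant near $\partial\Om$ (the heat kernel $p_t$ vanishes there), so $\phi^2/u_k$ could blow up. Here I would use that $\phi$ is supported in a compact set $K\subset\Om$. On $K$, the strong maximum principle / positivity improving property of the irreducible semigroup $e^{-tL_k}$ (equivalently, strict positivity of $p_t(x,y)$ on $\Om\times\Om$, together with Duhamel's formula \eqref{duhamel} which only increases $u_k$ above $e^{-tL_0}u_0$) yields $u_k\geq c_K>0$ on $K$ for the fixed time under consideration; note $u_k\in\Wa_0(\Om)\cap L^\infty$ in the spatial variable as recalled before Lemma \ref{domination}. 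Hence on $K$ the function $\phi^2/u_k$ is bounded, and since $u_k$ is bounded above as well, the map $t\mapsto t$, $t\mapsto 1/t$ are Lipschitz on $[c_K,\|u_k\|_\infty]$; composing the normal-contraction-type estimates for Dirichlet forms (or just estimating the double integral of $(\phi^2/u_k(x)-\phi^2/u_k(y))^2$ directly using $|1/a-1/b|\leq c_K^{-2}|a-b|$ and $|p^2-q^2|\leq 2\|\phi\|_\infty|p-q|$ on $K$, plus the part of the integral with one variable outside $K$ where $\phi^2/u_k$ vanishes) shows $\calE[\phi^2/u_k]<\infty$. Since $\phi^2/u_k=0$ a.e.\ on $\Om^c$, this gives $\phi^2/u_k\in\Wo$.

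With both ingredients in place, I would finish as follows. Write
\begin{eqnarray*}
\calE_\Om\Big(u_k,\frac{\phi^2}{u_k}\Big)
=\frac12\A(d,\alpha)\int_\Om\int_\Om
\frac{\big(u_k(x)-u_k(y)\big)\big(\tfrac{\phi^2(x)}{u_k(x)}-\tfrac{\phi^2(y)}{u_k(y)}\big)}
{|x-y|^{d+\alpha}}\,dx\,dy
+\int_\Om\frac{\phi^2(x)}{u_k(x)}u_k(x)\kappa_\Om(x)\,dx,
\end{eqnarray*}
observe that the last term is $\int_\Om\phi^2\kappa_\Om\,dx$, which is exactly the corresponding term in $\calE_\Om[\phi]$, and apply \eqref{key-alg} inside the double integral to bound the first term by $\tfrac12\A(d,\alpha)\int_\Om\int_\Om (\phi(x)-\phi(y))^2|x-y|^{-d-\alpha}\,dx\,dy$. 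Summing gives $\calE_\Om(u_k,\phi^2/u_k)\leq\calE_\Om[\phi]$. One should also justify that the double integral is absolutely convergent so that the splitting into the two summands of \eqref{key-alg} is legitimate: the positive part is dominated by $\calE_\Om[\phi]<\infty$, and the "perfect square" part is then automatically finite as the difference of two finite quantities (or one can invoke Fatou along a truncation $u_k\vee\varepsilon$ and let $\varepsilon\downarrow0$, using that on the support of $\phi$ the truncation is eventually inactive). This completes the proof.
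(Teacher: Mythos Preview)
Your proof is correct and follows essentially the same approach as the paper: a pointwise algebraic inequality for the symmetric integrand, combined with the strict positivity of $u_k$ on the (compact) support of $\phi$ to justify membership in $\Wo$, and the observation that the killing term $\int_\Om \phi^2\kappa_\Om\,dx$ matches exactly on both sides. Your identity \eqref{key-alg} with the explicit perfect square is in fact a slightly sharper statement of the same inequality the paper derives (the paper reduces it to $(u_k(x)\phi(y)-u_k(y)\phi(x))^2\geq 0$, which is equivalent); and your direct treatment of $\phi^2/u_k$ replaces the paper's two-step argument (first $\phi/u_k\in\Wo\cap L^\infty$, then products of bounded finite-energy functions have finite energy), but the substance is identical.
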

\begin{proof}
It suffices to give the proof for positive $\phi$. Let $\phi\geq 0$ and $u_k$ be as specified in the lemma. As $\calE_\Om$ is a Dirichlet form,  to prove the first part it suffices to prove that $\frac{\phi}{u_k}\in\Wa_0(\Om)\cap L^\infty$.\\
Clearly for every compact subset $K\subset\Om$ there is a  constant $\kappa_k>0$ such that $u_k\geq\kappa_k$ on $K$ and then $\frac{\phi}{u_k}\in L^\infty$. To show that the latter function has finite energy we shall proceed directly.\\
An elementary computation yields
\begin{eqnarray}
\frac{\phi(x)}{u_{k}(x)}-\frac{\phi(y)}{u_{k}(y)}&=& \phi(x)\Big(\frac{1}{u_{k}(x)}-\frac{1}{u_{k}(y)}\Big) + \frac{1}{u_{k}(y)}\Big(\phi(x)-\phi(y)\Big)\nonumber\\
&=&\frac{\phi(x)}{u_{k}(x)u_{k}(y)}\Big(u_{k}(y)-u_{k}(x)\Big) + \frac{1}{u_{k}(y)}\Big(\phi(x)-\phi(y)\Big),
\end{eqnarray}
leading to
\begin{eqnarray}
\Big(\frac{\phi(x)}{u_{k}(x)}-\frac{\phi(y)}{u_{k}(y)}\Big)^{2}
&\leq&\frac{2\phi^{2}(x)}{u_{k}^{2}(x)u_{k}^{2}(y)}\Big(u_{k}(y)-u_{k}(x)\Big)^{2}
+\frac{2}{u_{k}^{2}(y)}\Big(\phi(x)-\phi(y)\Big)^{2}\nonumber\\
&\leq &2\max(\frac{\|\phi\|_\infty^{2}}{\kappa_{k}^{4}},\frac{1}{\kappa_{k}^{2}})
\Big[\Big(u_{k}(y)-u_{k}(x)\Big)^{2}
+\Big(\phi(x)-\phi(y)\Big)^{2}\Big]\nonumber \\
&\leq& C\Big[\Big(u_{k}(y)-u_{k}(x)\Big)^{2} +\Big(\phi(x)-\phi(y)\Big)^{2}\Big].
\end{eqnarray}
Finally we obtain
\begin{eqnarray}
\calE_\Omega[\frac{\phi}{u_k}] \leq C\big(\calE_\Om[u_k] +
\calE_\Omega[\phi]\big)<\infty,
\end{eqnarray}
yielding that  $\frac{\phi}{u_{k}}\in W_{0}^{\frac{\alpha}{2},2}(\Omega)$.\\
We proceed now to prove inequality (\ref{en-in}). A straightforward computation yields
\begin{eqnarray}
(u_k(x)-u_k(y))
\big(\frac{\phi^2(x)}{u_{k}(x)}-\frac{\phi^2(y)}{u_{k}(y)}\big)&=&\phi^2(x)+\phi^2(y)
\nonumber\\
&-&\frac{u_k(x)}{u_k(y)}\phi^2(y)-\frac{u_k(y)}{u_k(x)}\phi^2(x)\nonumber\\
&=&\phi^2(x)+\phi^2(y)-\frac{u_k^2(x)\phi^2(y)+u_k^2(y)\phi^2(x)}{u_k(x)u_k(y)}
\nonumber\\
&\leq&(\phi(x)-\phi(y))^2.
\end{eqnarray}
Thus
\begin{eqnarray}
\calE_\Om(u_k,\frac{\phi^2}{u_k})&=&\frac{1}{2}\mathcal{A}(d,\alpha)\int_\Omega
\int_\Omega\frac{(u_k(x)-u_k(y))
(\frac{\phi^2}{u_{k}}(x)-\frac{\phi^2}{u_{k}}(y))
}{|x-y|^{d+\alpha}}\,dx\,dy + \int_\Omega \phi^2(x)\kappa_\Omega(x)\,dx\nonumber\\
&\leq& \calE_\Omega[\phi],
\end{eqnarray}
which was to be proved.
\end{proof}
By the end of this section, we give a technical  result dealing  about the comparability of the ground state of the operator $L_0$ that will be needed in the proof of the nonexistence part.
\begin{lem}
Set $\varphi_0>0$ the normalized ground state of the operator $L_0$ and $h(t,x):=e^{-tL_0}u_0(x)$ for every $t>0$ and every $x\in\Omega$. Then
\begin{eqnarray}
h(t,\cdot)\sim\varphi_0,\ {\rm for\ every\ fixed}\ t>0.
\end{eqnarray}
\label{comparability}
\end{lem}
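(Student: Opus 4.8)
The plan is to establish the two-sided bound $h(t,\cdot)\sim\varphi_0$ by exploiting the spectral/semigroup structure of $L_0$ together with the classical intrinsic ultracontractivity (IU) property of the Dirichlet fractional Laplacian on a bounded Lipschitz domain. First I would recall that $L_0$ has discrete spectrum $0<\lambda_0<\lambda_1\le\cdots$ with a strictly positive, bounded, normalized ground state $\varphi_0$ (this follows from irreducibility and transience of $\calE_\Om$, already noted in the excerpt, plus the compact embedding $W_0^{\alpha/2,2}(\Om)\hookrightarrow L^2(\Om)$ for bounded $\Om$). Writing $h(t,\cdot)=e^{-tL_0}u_0$ in the spectral representation, the leading-order behaviour as $t\to\infty$ is $e^{-\lambda_0 t}\langle u_0,\varphi_0\rangle\,\varphi_0$, and one expects $h(t,\cdot)\sim\varphi_0$ for every fixed $t$ once $\langle u_0,\varphi_0\rangle>0$, which holds because $u_0\ge 0$, $u_0\not\equiv 0$, and $\varphi_0>0$.

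The key step is the lower bound. I would use intrinsic ultracontractivity: for bounded $\Om$ (even Lipschitz, by results of Kulczycki, Chen--Song, Bogdan et al.) the heat kernel $p_t(x,y)$ of $e^{-tL_0}$ satisfies, for each fixed $t>0$, a two-sided estimate
\begin{eqnarray}
c_t\,\varphi_0(x)\varphi_0(y)\ \le\ p_t(x,y)\ \le\ C_t\,\varphi_0(x)\varphi_0(y),
\label{IUest}
\end{eqnarray}
with constants $0<c_t\le C_t<\infty$ depending only on $t$ and $\Om$. Granting \eqref{IUest}, integrating against $u_0\ge 0$ gives immediately
\begin{eqnarray}
c_t\Big(\int_\Om u_0\varphi_0\,dy\Big)\varphi_0(x)\ \le\ h(t,x)\ \le\ C_t\Big(\int_\Om u_0\varphi_0\,dy\Big)\varphi_0(x),
\end{eqnarray}
i.e. exactly $h(t,\cdot)\sim\varphi_0$ with the implied constants uniform in $x$ (but $t$-dependent, which is all that is claimed). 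One must check $0<\int_\Om u_0\varphi_0\,dy<\infty$: finiteness is clear since $u_0\in L^2(\Om)$ and $\varphi_0\in L^\infty(\Om)$, and strict positivity follows since $u_0\ge 0$, $u_0\not\equiv 0$ and $\varphi_0>0$ a.e.

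Alternatively, if one prefers to avoid quoting IU, I would argue more hands-on: the upper bound $h(t,x)\le C_t\varphi_0(x)$ follows from the well-known on-diagonal bound $p_t(x,y)\le C_t\varphi_0(x)\varphi_0(y)$ combined with $\varphi_0\in L^1$; for the lower bound one uses the semigroup identity $h(t,x)=e^{-(t/2)L_0}h(t/2,\cdot)(x)=\int_\Om p_{t/2}(x,y)h(t/2,y)\,dy$, together with positivity and strict positivity of both $p_{t/2}$ and $h(t/2,\cdot)$ (the latter by irreducibility of the semigroup), plus a compactness/continuity argument on $\overline\Om$ to get a uniform lower bound of the shape $c_t\varphi_0(x)$. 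The main obstacle is precisely making the lower bound uniform up to the boundary of $\Om$, where $\varphi_0$ decays like $\mathrm{dist}(x,\partial\Om)^{\alpha/2}$; this is exactly what intrinsic ultracontractivity (or the sharp boundary Harnack principle for the fractional Laplacian) is designed to handle, so I would invoke it directly rather than reprove it. Everything else in the argument is routine.
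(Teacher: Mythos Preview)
Your proof is correct and follows essentially the same route as the paper: both invoke intrinsic ultracontractivity of $e^{-tL_0}$ (the paper cites Kaleta--Kulczycki for IU and Davies--Simon for the fact that IU yields the lower heat-kernel bound) to obtain the two-sided estimate $p_t(x,y)\sim\varphi_0(x)\varphi_0(y)$, and then integrate against $u_0$. Your write-up is in fact more explicit than the paper's, in particular in checking that $0<\int_\Om u_0\varphi_0<\infty$.
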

\begin{proof}
By a result due to Kulczycki \cite{kul} the operator $e^{-tL_0},\ t>0$ is intrinsically ultracontractive. Hence $p_t(x,y)\leq c_t\varphi_0(x)\varphi_0(y)$, which leads to $h(t,x)\leq c_t\varphi_0(x)$.\\
The reversed inequality follows from the intrinsic ultracontractivity as well (see \cite[p.345]{davies-simon}).
\end{proof}
\section{Existence of nonnegative solutions}
\begin{theo} Assume that $\lambda_0^V>-\infty$. Then the heat equation (\ref{heat1}) has at least one nonnegative global solution.
\label{suffi}
\end{theo}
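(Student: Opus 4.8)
The plan is to run the monotone approximation scheme already set up in the paper: take the increasing sequence $u_k(t)=e^{-tL_k}u_0$ from Lemma \ref{domination}, prove that $\lambda_0^V>-\infty$ forces a uniform (in $k$) a priori bound on it, conclude that $\ui:=\lim_k u_k$ is well defined, and then pass to the limit in the variational identity for the problems $(P_k)$. Observe that Lemma \ref{domination}(ii) \emph{already} identifies $\lim_k u_k$ as a solution \emph{provided} a nonnegative solution of \eqref{heat1} exists; so the only genuinely new ingredient required is an a priori bound that does not presuppose existence, and this is exactly what the spectral hypothesis supplies.

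First I would establish a uniform spectral lower bound for the approximating operators. Since $V_k=V\wedge k\le V$ and $V\in L^1_{loc}(\Om)$, for every $\phi\in C_c^\infty(\Om)$ one has $\calE_\Om[\phi]-\int\phi^2V_k\,dx\ge\calE_\Om[\phi]-\int\phi^2V\,dx\ge\lambda_0^V\int\phi^2\,dx$. Because $V_k\in L^\infty(\Om)$, the form $\calE_\Om-V_k$ is a bounded perturbation of $\calE_\Om$, hence closed on $\Wo$, and $C_c^\infty(\Om)$ remains a form core for it (it is one for $\calE_\Om$ by the construction recalled in Section~2, and bounded form-perturbations preserve cores); therefore the inequality extends to all of $\Wo$, i.e. $L_k\ge\lambda_0^V$ for every $k$. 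By the spectral theorem this gives
\[
\|u_k(t)\|_{L^2(\Om)}=\|e^{-tL_k}u_0\|_{L^2}\le e^{-t\lambda_0^V}\,\|u_0\|_{L^2},\qquad\forall\,k,\ \forall\,t>0 ,
\]
a bound finite for each $t$ and locally bounded in $t$. Together with the monotonicity from Lemma \ref{domination}(i), monotone convergence yields $u_k(t)\uparrow\ui(t):=\lim_k u_k(t)$ in $L^2(\Om)$ for every $t>0$, with $\|\ui(t)\|_{L^2(\Om)}\le e^{-t\lambda_0^V}\|u_0\|_{L^2}$; dominated convergence in the time variable then gives $u_k\to\ui$ in $\calL^2_{loc}\big((0,\infty),L^2(\Om)\big)$, and plainly $\ui(t,\cdot)=0$ a.e.\ on $\Om^c$.

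It then remains to pass to the limit in the variational identity satisfied by $u_k$ for $(P_k)$. For a \emph{nonnegative} $\phi\in C_c^\infty\big([0,\infty)\times\calO\big)$, $\calO\sub\sub\Om$, the terms not involving the potential converge using the $L^2$-convergence above together with $-\phi_s+L_0\phi\in L^2\big((0,t)\times\Om\big)$ (noted in the Remark following Definition~\ref{sol-def}), while $u_k\phi V_k\uparrow\ui\phi V$ pointwise, so that $\int_0^t\!\int_\calO u_k\phi V_k\to\int_0^t\!\int_\calO\ui\phi V$ by monotone convergence; in particular this last integral is finite. Choosing product test functions $\eta(s)\zeta(x)$ with $0\le\eta\le1$, $\eta\equiv1$ on $[a,b]$, $0\le\zeta\le1$, $\zeta\equiv1$ on $\calO$ and $\zeta$ supported in $\Om$ then shows $\ui\in L^1_{loc}\big((0,\infty)\times\Om,dt\otimes V\,dx\big)$. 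Finally a general $\phi\in C_c^\infty\big([0,\infty)\times\calO\big)$ is handled by writing $\phi=(\chi+\phi)-\chi$ with $\chi\in C_c^\infty\big([0,\infty)\times\calO\big)$, $\chi\ge|\phi|$ everywhere (e.g.\ $\chi=2\|\phi\|_\infty$ times a cut-off), reducing to the nonnegative case by linearity, and $\ui$ comes out as a nonnegative solution on $(0,T)\times\Om$ for every $T\le\infty$, hence a global one (it is nontrivial since $\ui\ge u_1=e^{-tL_1}u_0$). I expect the two delicate points to be (a) justifying that $C_c^\infty(\Om)$ is still a form core after the bounded perturbation, which is what makes the bound $L_k\ge\lambda_0^V$ legitimate, and (b) the bookkeeping in the potential term: monotone convergence obliges one to treat $\phi\ge0$ first and only afterward recover arbitrary $\phi$, and it is precisely there that the integrability $\ui\in L^1_{loc}(dt\otimes V\,dx)$ must be produced rather than assumed.
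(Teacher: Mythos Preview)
Your argument is correct and follows the same monotone-approximation scheme as the paper: derive the uniform bound $\|u_k(t)\|_{L^2}\le e^{-\lambda_0^V t}\|u_0\|_{L^2}$ from the spectral hypothesis (the paper does this via a differential inequality and Gronwall, you via the spectral theorem---equivalent), then pass to the limit in the variational identity for $(P_k)$. The one minor difference is that the paper obtains the integrability $\ui\in L^1_{loc}\big((0,\infty)\times\Om,\,dt\otimes V\,dx\big)$ by taking the monotone limit in Duhamel's formula \eqref{duhamel} and invoking the strict positivity of $p_t$ on $\Om\times\Om$, whereas you extract it directly from the variational identity via nonnegative test functions; both routes are short and your handling of general $\phi$ by the decomposition $\phi=(\chi+\phi)-\chi$ is a clean way to close the argument.
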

The substance of Theorem \ref{suffi} may be established using \cite[Proposition 2.1]{stollmann} or
\cite[Proposition 5.7]{voigt}. However, for the convenience of the reader we shall give an adapted proof.
\begin{proof} We follow the local case \cite{cabre-martel,goldstein-zhang}.\\
Let $u_{n}(t)=e^{-tL_n}u_0,\ t\geq 0$. By Proposition \ref{existence1} and its proof, $u_n$ is a global solution of the approximate problem $(P_n)$ and satisfies
\begin{eqnarray}
\frac{d}{dt}\|u_{n}\|_{L^{2}(\Omega)}^{2}&=&-2(L_nu_n,u_n)\leq -2\lambda_{0}^{V}\int_{\Omega}u_{n}^{2}(t,x)\,dx.
\end{eqnarray}
The latter inequality is an immediate consequence of the finiteness  of $\lambda_0^V$. Hence by Gronwall's lemma
we achieve the upper estimate
 \begin{equation}
 \|u_{n}\|_{L^{2}(\Omega)}\leq \|u_{0}\|_{L^{2}(\Omega)}e^{-\lambda_{0}^{V}t}, \forall\,t\geq 0.
\label{uniform-bound}
\end{equation}
Thus the sequence  $(u_n)$  increases to a nonnegative  function $u$ for every $t\geq 0$ and a.e. $x\in\R^d$.
Furthermore  $u=0,\ a.e.$ on $\Omega^c$ and $u\in\calL_{loc}^2\big((0,\infty), L^2(\Omega)\big)$.\\
We are in position now to prove that $u$ solves the heat equation (\ref{heat1}). Indeed, having  Duhamel's formula
for the $u_n$'s in hands, we conclude by monotone convergence theorem  that
\begin{eqnarray}
u(t,x)&=& e^{-tL_0}u_0(x)+\int_0^t\int_{\Omega}p_{t-s}(x,y)u(s,y)\,V\,dy\,ds,
\end{eqnarray}
Since  $p_t>0,\ t>0$ on $\Om\times\Om$ and is jointly continuous the latter formula implies, that
$u\in L_{loc}^1\big((0,\infty)\times\Omega,dt\otimes V\,dx\big)$.\\
Now utilizing the equation fulfilled by the $u_n$'s being solutions of the $P_n$'s we obtain by dominated
convergence theorem, for every $t\geq 0$ and every test function $\phi$,
\begin{eqnarray}
\int_\Om u\phi|_{0}^{t}\,dx +\int_{0}^{t}\int_{\Om}
u(t,x)(-\phi_{t}(t,x)+L_0\phi(t,x))\,dx\,dt\nonumber\\
=\int_{0}^{t}\int_{\Om} u(t,x)\phi(t,x)V(x)\,dx\,dt.
\end{eqnarray}
\end{proof}
\begin{rk}{\rm
For every $u_0\in L^2(\Om)$ such that $u_0\geq 0$ and every $t\geq 0$, we define
\begin{eqnarray}
T_t u_0:= u(t,\cdot),
\end{eqnarray}
where $u(t,x)$ is the solution constructed in the latter proof. Then
\begin{eqnarray}
\vert|T_t u_0 \vert|_{L^2(\Om)}\leq e^{-\lam_0^V t}\vert|u_0\vert|_{L^2(\Om)},\ \forall\,t\geq 0.
\end{eqnarray}
Now for each $u_0\in L^2(\Om)$ and every $t\geq 0$, we set
\begin{eqnarray}
T_t u_0:= T_t u_0^+ - T_t u_0^-.
\end{eqnarray}
Then an elementary computation yields
\begin{eqnarray}
\vert|T_t u_0 \vert|_{L^2(\Om)}\leq e^{-\lam_0^V t}\vert|u_0\vert|_{L^2(\Om)},\ \forall\,t\geq 0.
\end{eqnarray}
Thereby, we  construct a family of linear  selfadjoint operators
\begin{eqnarray}
T_t\colon\, L^2(\Om)\to L^2(\Om),\ u_0\mapsto T_t u_0,
\end{eqnarray}
such that the operator norm of $T_t$ satisfies
\begin{eqnarray}
\|T_t\|\leq e^{-\lambda_0^V t},\ \forall\,t\geq 0.
\label{semigroup}
\end{eqnarray}
Furthermore the family  $(T_t)_{t\geq 0}$ satisfies the semigroup property and is strongly continuous.
}
\end{rk}

\section{Blow-up of nonnegative solutions}
%
In order to prove the blow-up part we are going first, to establish an estimate for the integral
$\int \ln u$, whenever $u$ is a nonnegative solution. Such estimate has an independent interest and is involved
to derive regularity properties for the solutions. Furthermore we shall use it to give necessary condition for the existence of exponentially bounded solutions.  Its use in our context is inspired from the one corresponding to the Dirichlet Laplacian (see \cite{cabre-martel,goldstein-zhang}).
\begin{theo}
Assume that $u$ is a nonnegative solution of the heat equation (\ref{heat1}). Then
for all $0<t_{1}<t_{2}<T$, and all $\Phi\in C_{c}^{\infty}(\Omega)$, we have
\begin{equation}\label{log-sol}
\int_{\Omega} \Phi^{2}\,V\,dx- \calE_{\Omega}[\Phi]
\leq \frac{1}{t_{2}-t_{1}}\int_{\Omega}\ln\Big(\frac{{u}(t_{2})}{{u}(t_{1})}\Big)
\Phi^{2}\,dx.
\end{equation}
\label{Log-Sol}
\end{theo}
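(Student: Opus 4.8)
The plan is to test the weak formulation of the heat equation against a time-independent test function of the form $\phi(s,x)=\Phi^2(x)/u(s,x)$, or rather a regularized version of it, and then exploit the energy inequality of Lemma~\ref{enregy-comprison} together with the concavity of the logarithm. Concretely, I would first work with the approximate solutions $u_k$ from Lemma~\ref{domination}(ii), which are bounded, bounded below on compact subsets of $\Omega$, lie in $\Wa_0(\Omega)$ in the spatial variable, and are continuous (indeed differentiable) in time, so that $\Phi^2/u_k$ is an admissible test function by Lemma~\ref{enregy-comprison}. For such $u_k$ one has the classical identity
\begin{eqnarray*}
\frac{d}{ds}\int_\Omega \ln u_k(s,x)\,\Phi^2(x)\,dx
= \int_\Omega \frac{1}{u_k(s,x)}\,\frac{\partial u_k}{\partial s}(s,x)\,\Phi^2(x)\,dx,
\end{eqnarray*}
and since $-\partial_s u_k = L_0 u_k - V_k u_k$ (in the appropriate weak sense), the right-hand side equals
$-\calE_\Omega\big(u_k(s),\Phi^2/u_k(s)\big) + \int_\Omega \Phi^2 V_k\,dx$.
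Applying Lemma~\ref{enregy-comprison}, $\calE_\Omega(u_k(s),\Phi^2/u_k(s))\le \calE_\Omega[\Phi]$, we get the differential inequality
\begin{eqnarray*}
\frac{d}{ds}\int_\Omega \ln u_k(s,x)\,\Phi^2(x)\,dx \ \ge\ \int_\Omega \Phi^2 V_k\,dx - \calE_\Omega[\Phi].
\end{eqnarray*}
Integrating from $t_1$ to $t_2$ and dividing by $t_2-t_1$ yields the desired inequality with $V_k$ in place of $V$ and $u_k$ in place of $u$.

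Next I would pass to the limit $k\to\infty$. On the left-hand side, $V_k\uparrow V$, so $\int_\Omega \Phi^2 V_k\,dx \to \int_\Omega \Phi^2 V\,dx$ by monotone convergence (finite because $\Phi\in C_c^\infty(\Omega)$ and $V\in L^1_{loc}$). On the right, by Lemma~\ref{domination}, $u_k\uparrow u_\infty$ where $u_\infty\le u$ is the minimal nonnegative solution; a complementary argument (or the fact that $u_\infty = u$ when one takes $u$ itself to be the given solution and notes $u_k \le u$) shows $u_k(t_i) \to$ the relevant solution values. The ratio $u_k(t_2)/u_k(t_1)$ requires care: one needs $\ln(u_k(t_2)/u_k(t_1))\to \ln(u(t_2)/u(t_1))$ in $L^1_{loc}$ against $\Phi^2$. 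Monotone convergence handles $\ln u_k(t_2)\uparrow \ln u(t_2)$ from below, but $\ln u_k(t_1)$ also increases, so the difference is not monotone; here I would use that on the compact support of $\Phi$ the functions $u_k(t_1)$ are bounded below by a positive constant independent of $k$ (from Lemma~\ref{comparability}-type lower bounds, since $u_k \ge u_1 = e^{-tL_1}u_0 \sim \varphi_0 > 0$ on compacts), so $\ln u_k(t_1)$ is uniformly bounded below and dominated convergence applies.

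The honest technical obstacle is justifying the formal time-differentiation and the identity $-\partial_s u_k = L_0 u_k - V_k u_k$ tested against $\Phi^2/u_k$ rigorously — i.e. turning the weak formulation (\ref{variational}) for $u_k$, which only allows smooth compactly supported test functions, into the statement with the non-smooth, time-dependent test function $\Phi^2/u_k(s,\cdot)$. I would handle this by a density/approximation argument: since $u_k(\cdot)\in C((0,T);\Wa_0(\Omega))\cap L^\infty$ solves the $P_k$ problem in the strong $L^2$ sense (it is $e^{-sL_k}u_0$ with $L_k$ self-adjoint and bounded-below), one can write the Duhamel/semigroup identity and differentiate directly, or approximate $\Phi^2/u_k$ in the norm $N(\cdot)$ of Lemma~\ref{domination} by smooth functions and pass to the limit using the continuity of $s\mapsto u_k(s)$ in $\Wa_0(\Omega)$ and the energy bounds established in Lemma~\ref{enregy-comprison}. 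Once this regularization is in place, the rest is the elementary chain of inequalities above, and letting $t_1,t_2$ be arbitrary in $(0,T)$ and $\Phi$ arbitrary in $C_c^\infty(\Omega)$ completes the proof.
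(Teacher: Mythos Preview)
Your proposal is correct and follows essentially the same route as the paper: work with the approximate solutions $u_k=e^{-tL_k}u_0$, test the equation against $\Phi^2/u_k$, invoke Lemma~\ref{enregy-comprison} to obtain the differential inequality for $\frac{d}{ds}\int_\Omega(\ln u_k)\Phi^2\,dx$, integrate over $[t_1,t_2]$, and pass to the limit by monotone convergence. The paper is terser about the limit step (it uses Jensen only to verify that the log-integrals are finite and then appeals to monotone convergence), whereas you spell out the non-monotonicity of the ratio and supply the uniform lower bound on $u_k(t_1)$ on $\mathrm{supp}\,\Phi$; both arguments arrive at the same inequality.
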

\begin{proof}
Pick a function $\Phi\in C_c^\infty(\Omega)$. Without loss of generality we may and shall suppose that $\int \Phi^2\,dx=1$.\\
Let $u_n(t)$ be the nonnegative solution of problem $(P_n)$ given by  $u_n(t)=e^{-tL_n}u_0,\ t\geq 0$. We already know that
\begin{eqnarray}
-\frac{\partial}{\partial t} u_n(t)=L_nu_n(t),\ t>0.
\label{derivative}
\end{eqnarray}
On the other hand by Lemma \ref{enregy-comprison} we have $\frac{\Phi^{2}}{u_{n}}\in W_0^{\alpha/2,2}(\Omega)$. Thus multiplying the latter identity by $\frac{\Phi^{2}}{u_{n}}$ and integrating over $\Om$ yields
\begin{eqnarray}
\int_\Om \Phi^{2}\,V_n\,dx&=&\int_\Om \frac{\partial u_n}{\partial t}\frac{\Phi^{2}}{u_{n}}dx
+ \calE_\Omega(u_{n},\frac{\Phi^{2}}{u_{n}}).
\label{inter}
\end{eqnarray}
Let us recall that from Duhamel formula we derive $u_n(t)\geq \int_\Om p_t(x,y)u_0(y)\,dy$ for each $t>0$. Thus from the continuity  of $p_t(x,y),\ t>0$ in $t,x$ and $y$ together with the fact that $p_t>0$ we conclude: for every $0<a\leq b<\infty$ and every compact subset $K\subset\Om$,
\begin{eqnarray}
\inf_{a\leq t\leq b,\ x\in K}u_n(t,x)>C_n(K,a,b)>0.
\end{eqnarray}
Moreover from  estimate (\ref{DerivEst}) we infer
\begin{eqnarray}
\sup_{a\leq t\leq b}\|\frac{\partial u_n}{\partial t}\|_{L^\infty(\Om)}\leq C_n(a,b)<\infty.
\end{eqnarray}
The use of these both facts enables us to interchange derivation and integration in(\ref{inter}) to obtain with the help of  the energy estimate from Lemma \ref{enregy-comprison}
\begin{eqnarray}
\int_\Om \Phi^{2}\,V_n\,dx &=&\frac{d}{dt}\int_\Om(\ln u_{n})\Phi^{2} dx + \calE_\Omega(u_{n},\frac{\Phi^{2}}{u_{n}})\nonumber\\
&\leq& \frac{d}{dt}\int_\Om(\ln u_{n})\Phi^{2}\,dx + \calE_\Omega[\Phi].
\end{eqnarray}
Hence, integrating between $t_{1}$ and $t_{2}$ we achieve,
\begin{eqnarray}
\int_{\Omega} \Phi^{2}\,V_n\,dx - \calE_\Omega[\Phi]\leq \frac{1}{t_2 - t_1}\int_{\Omega}\ln\Big(\frac{{u_n}(t_{2})}{{u_n}(t_{1})}\Big)
\Phi^{2}\,dx.
\end{eqnarray}
On the other hand using Lemma \ref{domination} together with Jensen's inequality we achieve
\begin{eqnarray}
-\infty<\int_\Om \ln u_n(t_i)\,\Phi^2dx\leq \ln\big(\int_\Om  u_n(t_i)\,\Phi^2dx\big)\leq
\ln\big(\int_\Om u(t_i)\,\Phi^2dx\big)<\infty,\ i=1,2.
\end{eqnarray}
Finally, we  pass to the limit and use monotone convergence theorem to obtain inequality (\ref{log-sol}), which finishes the proof.
\end{proof}
On the light of Theorem \ref{suffi} together with Theorem \ref{Log-Sol}, we get a necessary and sufficient condition for the existence of a nonnegative exponentially bounded global solution, i.e. a nonnegative global solution such that there is $C>0,\ \omega\in\R$ with
\begin{eqnarray}
\|u(t)\|_{L^2(\Om)}\leq Ce^{\omega t},\ \forall\,t>0.
\end{eqnarray}
Let us mention that the following proposition was also proved in \cite[Theorem 4.2]{keller-lenz}, in a more general context, however with a different proof.
\begin{prop}
The heat equation (\ref{heat1}) has a nonnegative exponentially bounded global solution if and only if $\lam_0^V>-\infty$.
\end{prop}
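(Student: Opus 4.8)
The plan is to prove the equivalence by combining Theorem \ref{suffi} for one direction with Theorem \ref{Log-Sol} for the other.

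\medskip

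\noindent\textbf{The ``if'' direction.} Suppose $\lam_0^V>-\infty$. Then Theorem \ref{suffi} already produces a nonnegative global solution $u$, and the remark following its proof shows that this solution satisfies $\|u(t,\cdot)\|_{L^2(\Om)}\leq e^{-\lam_0^V t}\|u_0\|_{L^2(\Om)}$ for all $t>0$. Since $\lam_0^V$ is finite, this is precisely the statement that $u$ is exponentially bounded. So this direction is immediate once we recall the quantitative estimate \eqref{uniform-bound} built into the construction.

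\medskip

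\noindent\textbf{The ``only if'' direction.} Suppose there exists a nonnegative global solution $u$ which is exponentially bounded, say $\int_\Om u(t,\cdot)^2\,dx\le M e^{2\omega t}$ (or more simply $\int_\Om u(t,\cdot)\Phi^2\,dx$ grows at most exponentially) for some constants $M,\omega$ and all $t>0$. Fix any $\Phi\in C_c^\infty(\Om)$ with $\int\Phi^2\,dx=1$. By Theorem \ref{Log-Sol}, for all $0<t_1<t_2$,
\[
\int_\Om\Phi^2\,V\,dx-\calE_\Om[\Phi]\;\le\;\frac{1}{t_2-t_1}\int_\Om\ln\!\Big(\frac{u(t_2)}{u(t_1)}\Big)\Phi^2\,dx
\;=\;\frac{1}{t_2-t_1}\Big(\int_\Om(\ln u(t_2))\Phi^2\,dx-\int_\Om(\ln u(t_1))\Phi^2\,dx\Big).
\]
Now fix $t_1$ and let $t_2\to\infty$. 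By Jensen's inequality $\int_\Om(\ln u(t_2))\Phi^2\,dx\le\ln\big(\int_\Om u(t_2)\Phi^2\,dx\big)$, and the exponential bound gives $\ln\big(\int_\Om u(t_2)\Phi^2\,dx\big)\le C+\omega t_2$ for a constant $C$ depending on $\Phi$ but not on $t_2$. Since $\int_\Om(\ln u(t_1))\Phi^2\,dx$ is a fixed finite number (finiteness of $\int\ln u(t_1)\Phi^2$ being the content already used in the proof of Theorem \ref{Log-Sol}), dividing by $t_2-t_1$ and letting $t_2\to\infty$ yields
\[
\int_\Om\Phi^2\,V\,dx-\calE_\Om[\Phi]\;\le\;\omega.
\]
Rearranging, $\calE_\Om[\Phi]-\int_\Om\Phi^2\,V\,dx\ge-\omega$ for every normalized $\Phi\in C_c^\infty(\Om)$, and by homogeneity the same holds for every $\Phi\in C_c^\infty(\Om)\setminus\{0\}$ after dividing by $\int\Phi^2\,dx$. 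Taking the infimum over $\Phi$ gives $\lam_0^V\ge-\omega>-\infty$.

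\medskip

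\noindent\textbf{Main obstacle.} The delicate point is ensuring the two $\ln$-integrals are genuinely finite and that the limiting argument is legitimate — i.e.\ that $\int_\Om(\ln u(t_1))\Phi^2\,dx>-\infty$ and that the exponential bound really controls $\int_\Om(\ln u(t_2))\Phi^2\,dx$ from above uniformly in $t_2$ up to a term linear in $t_2$. The lower bound on $\int_\Om(\ln u(t_1))\Phi^2\,dx$ follows exactly as in the proof of Theorem \ref{Log-Sol} (via Lemma \ref{domination} and the fact that the approximating solutions $u_k$ have strictly positive lower bounds on compact sets), while the upper control is precisely Jensen plus the hypothesis of exponential boundedness; once these are in place the division by $t_2-t_1$ and passage to the limit is routine. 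One should also note that the precise meaning of ``exponentially bounded'' should be fixed (growth of an $L^2$ norm, or of $\int u\,\Phi^2$); either choice feeds the argument in the same way since $\Phi$ has compact support in $\Om$.
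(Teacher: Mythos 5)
Your proof is correct and follows essentially the same route as the paper: the forward direction is Theorem \ref{suffi} together with the bound (\ref{uniform-bound}), and the converse applies Theorem \ref{Log-Sol} with $t_1$ fixed, Jensen's inequality, and the exponential bound before letting $t_2\to\infty$. Your explicit attention to the finiteness of $\int_\Om (\ln u(t_1))\Phi^2\,dx$ is a point the paper passes over quickly, but the argument is the same.
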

\begin{proof}
We have already established the sufficiency part in Theorem \ref{suffi}.\\
Conversely, assume that equation (\ref{heat1}) has a nonnegative solution $u$ such that
$$
\|u(t)\|_{L^2(\Om)}\leq Ce^{\omega t},\ \forall\,t>0.
$$
Choosing in Theorem \ref{Log-Sol} $t_2=t>t_1=1$, $\Phi\in C_c^\infty(\Om)$ with $\int \Phi^2\,dx=1$ and apply Jensen's inequality leads to
\begin{eqnarray}
\int_{\Omega} \Phi^{2}\,V\,dx- \calE_{\Omega}[\Phi]
 & \leq &\frac{1}{t-1}\ln\big(\int_\Omega u(t)\Phi^2\,dx\big)
-\frac{1}{t-1} \int \ln(u(1))\Phi^2\,dx\nonumber\\
&\leq & \frac{\ln(Ce^{wt})}{t-1} +\frac{\ln(\|\Phi^4\|_{L^2(\Om)})}{t-1} - \frac{1}{t-1} \int \ln(u(1))\Phi^2\,dx.
\end{eqnarray}
Letting $t\to\infty$, yields $\lam_0^V>-\infty$.
\end{proof}
We are in position yet, to give a condition ensuring absence of nonnegative solution as well as instantaneous blow up.
\begin{theo} Assume that $\lambda_0^{(1-\epsilon)V}=-\infty$ for some $\epsilon>0$. Then the heat equation (\ref{heat1}) has no nonnegative solution. Moreover all nonnegative solutions  blow up completely and instantaneously, i.e.:
$\lim_{n\to\infty}u_n(t,x)=\infty$ for every $t>0$ and every $x\in\Omega$, where $u_n$ is the solution of $(P_n)$.
\label{necessary}
\end{theo}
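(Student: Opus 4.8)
The plan is to adapt the scheme of \cite{cabre-martel,goldstein-zhang}: first rule out the existence of a nonnegative solution by playing the logarithmic estimate of Theorem~\ref{Log-Sol} against the hypothesis $\lam_0^{(1-\ep)V}=-\infty$, and then promote nonexistence to complete instantaneous blow-up of the approximating sequence $(u_n)$ by a dichotomy built on Duhamel's formula~(\ref{duhamel}). We write $\ui:=\lim_n u_n$.

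\emph{No nonnegative solution.} Suppose $u\ge 0$ solves (\ref{heat1}) on some $(0,T)$. By (\ref{duhamel}), valid for each $u_n$ and inherited by $u\ge u_n$ (Lemma~\ref{domination}), one has $u(t_1,\cdot)\ge e^{-t_1L_0}u_0=h(t_1,\cdot)\ge c_{t_1}\varphi_0$ for every $t_1>0$, the last bound by Lemma~\ref{comparability}. Fix $0<t_1<t_2<T$ and $\Phi\in C_c^\infty(\Om)$ with $\int\Phi^2\,dx=1$. Theorem~\ref{Log-Sol} together with Jensen's inequality for the probability measure $\Phi^2\,dx$ gives
\begin{equation}\label{pp-key}
\int_\Om\Phi^2\,V\,dx-\calE_\Om[\Phi]\ \le\ \frac{1}{t_2-t_1}\int_\Om\ln\!\Big(\frac{u(t_2)}{u(t_1)}\Big)\Phi^2\,dx\ \le\ \frac{1}{t_2-t_1}\ln\!\Big(c_{t_1}^{-1}\!\int_\Om\varphi_0^{-1}\,\Phi^2\,u(t_2)\,dx\Big),
\end{equation}
the last integral being finite since $\Phi$ has compact support in $\Om$, $\varphi_0$ is continuous and strictly positive there, and $u(t_2,\cdot)\in L^1_{loc}(\Om)$. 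Split the left-hand side of (\ref{pp-key}) as $\big((1-\ep)\int\Phi^2V-\calE_\Om[\Phi]\big)+\ep\int\Phi^2V$, and choose $(\Phi_j)\subset C_c^\infty(\Om)$ with $\|\Phi_j\|_{L^2}=1$ realizing $\lam_0^{(1-\ep)V}=-\infty$, i.e.\ $\calE_\Om[\Phi_j]-(1-\ep)\int\Phi_j^2V\to-\infty$. For large $j$ we have $(1-\ep)\int\Phi_j^2V>\calE_\Om[\Phi_j]$, hence $\ep\int\Phi_j^2V\ge\frac{\ep}{1-\ep}\calE_\Om[\Phi_j]$ and the left-hand side of (\ref{pp-key}) is at least $\big((1-\ep)\int\Phi_j^2V-\calE_\Om[\Phi_j]\big)+\frac{\ep}{1-\ep}\calE_\Om[\Phi_j]\to+\infty$, dominating a positive multiple of the energy $\calE_\Om[\Phi_j]$. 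On the other hand, estimating $\varphi_0^{-1}\Phi_j^2$ through $\|\Phi_j\|_{L^\infty}^2$ and using that $\inf_{\mathrm{supp}\,\Phi_j}\varphi_0$ is bounded below by a power of the distance of $\mathrm{supp}\,\Phi_j$ to $\partial\Om$ ($\Om$ being a bounded Lipschitz domain), together with $u(t_2,\cdot)\in L^1_{loc}(\Om)$, the right-hand side of (\ref{pp-key}) grows at most logarithmically in the concentration scale of $\Phi_j$, whereas $\calE_\Om[\Phi_j]$ grows at least like a positive power of the inverse of that scale. This contradicts (\ref{pp-key}); hence (\ref{heat1}) has no nonnegative solution.

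\emph{Complete instantaneous blow-up.} Suppose the conclusion fails, so $\ui(t_0,x_0)<\infty$ for some $t_0>0$, $x_0\in\Om$. Passing to the limit in (\ref{duhamel}) by monotone convergence,
\begin{equation*}
\ui(t_0,x_0)=h(t_0,x_0)+\int_0^{t_0}\!\!\int_\Om p_{t_0-s}(x_0,y)\,\ui(s,y)\,V(y)\,dy\,ds<\infty.
\end{equation*}
Since $p_t>0$ on $\Om\times\Om$ and $p_t(x,y)\asymp\varphi_0(x)\varphi_0(y)$ (the intrinsic ultracontractivity estimate underlying Lemma~\ref{comparability}), this finiteness propagates to all $(t,x)$ with $0<t\le t_0$, and then, by the semigroup property applied on the intervals $[kt_0,(k+1)t_0]$, to all $0<t<T$, $x\in\Om$; moreover $\ui(t,\cdot)\le C_t\varphi_0$, whence $\ui\in\calL_{loc}^2\big((0,T),L^2(\Om)\big)\cap L^1_{loc}\big((0,T)\times\Om,dt\otimes V\,dx\big)$. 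Letting $n\to\infty$ in the weak formulation of $(P_n)$ satisfied by $u_n$ (as in Lemma~\ref{domination}) and using dominated convergence, $\ui$ is then a nonnegative solution of (\ref{heat1}) — contradicting the previous step. Hence $u_n(t,x)\to\infty$ for every $t>0$ and $x\in\Om$, and in particular no nonnegative solution exists.

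\emph{Main obstacle.} The heart of the matter is the quantitative comparison in the nonexistence part: one has to guarantee that the logarithmic right-hand side of (\ref{pp-key}) cannot match its left-hand side along a minimizing sequence for $\lam_0^{(1-\ep)V}$. This is where the strict gap $\ep>0$ is indispensable — it forces the left-hand side to be comparable to the large quantity $\calE_\Om[\Phi_j]$ rather than to the possibly small difference $\int\Phi_j^2V-\calE_\Om[\Phi_j]$ — and where sharp lower bounds for the ground state $\varphi_0$ on $\mathrm{supp}\,\Phi_j$, hence the Lipschitz regularity of $\partial\Om$ together with local integrability of the solution, enter. The propagation of finiteness in the blow-up part rests on the two-sided heat-kernel estimate and is comparatively routine.
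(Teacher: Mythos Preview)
Your argument has a genuine gap in the nonexistence step, precisely at what you call the ``main obstacle''. After reaching the inequality
\[
\int_\Om\Phi_j^2 V\,dx-\calE_\Om[\Phi_j]\ \le\ \frac{1}{t_2-t_1}\ln\Big(c_{t_1}^{-1}\!\int_\Om\varphi_0^{-1}\Phi_j^2\,u(t_2)\,dx\Big),
\]
you compare the two sides along a minimising sequence $(\Phi_j)$ for $\lam_0^{(1-\ep)V}$ by asserting that the right-hand side ``grows at most logarithmically in the concentration scale of $\Phi_j$'' while $\calE_\Om[\Phi_j]$ grows like a negative power of that scale. But the hypothesis $\lam_0^{(1-\ep)V}=-\infty$ gives \emph{no} structural information about $(\Phi_j)$: there is no reason for the supports to shrink, no reason for them to stay away from $\partial\Om$, no a~priori bound relating $\|\Phi_j\|_{L^\infty}$ to $\calE_\Om[\Phi_j]$, and $\calE_\Om[\Phi_j]$ need not even tend to infinity (one can have $\calE_\Om[\Phi_j]$ bounded while $\int\Phi_j^2V\to\infty$ once $V\notin L^{d/\alpha}_{loc}$). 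For boundary-singular potentials such as $V=\kappa\,\delta^{-\alpha}$ the natural minimising sequences drift to $\partial\Om$, so $\inf_{\mathrm{supp}\,\Phi_j}\varphi_0\to 0$ and your estimate of the right-hand side collapses. In short, the ``log versus power'' competition you invoke is a feature of one specific example (interior Hardy scaling), not a consequence of the assumption.

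The paper avoids this entirely. Rather than estimating the right-hand side along an uncontrolled sequence, it shows --- using the lower bound $\varphi_0\ge C\delta^{\alpha/2}$ on Lipschitz domains, Lemma~\ref{comparability}, and a decomposition of $\Om$ near and far from the boundary --- that $\ln\big(u(t_2)/u(t_1)\big)\in L^p(\Om)$ for every $p>1$, hence lies in the Kato class. This yields, for every $r>0$,
\[
\frac{1}{t_2-t_1}\int_\Om\ln\frac{u(t_2)}{u(t_1)}\,\Phi^2\,dx\ \le\ r\,\calE_\Om[\Phi]+C(r)\int_\Om\Phi^2\,dx\qquad\text{for all }\Phi\in C_c^\infty(\Om),
\]
which combined with Theorem~\ref{Log-Sol} gives $\lam_0^{(1+r)^{-1}V}>-\infty$ for every $r>0$, directly contradicting the hypothesis. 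The point is that the Kato-class bound is \emph{uniform over all test functions}, so no control on the minimising sequence is needed. Your blow-up step also skips some work: the claim $\ui(t,\cdot)\le C_t\varphi_0$ does not follow from what you have, and propagating finiteness through the two-sided kernel bound $p_t\asymp_t\varphi_0\otimes\varphi_0$ is delicate because the comparison constants degenerate as $t\to 0$; the paper handles this via the ``at most one good time'' dichotomy rather than a direct propagation.
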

\begin{proof}
Assume that a nonnegative solution $u$  exists. Relying
on Lemma \ref{domination} we may
and shall assume that $u$ is the increasing limit of the $u_n$'s.\\
Let $0<\rho \in C_0(\Omega)$ (the space of continuous functions on $\Om$ vanishing on $\partial\Om$) be such that $\ln \rho \in L^{p}(\Omega)$ for any $p>1$.\\
{\em Claim}: There exists at most one point $t_{1}\in (0,T)$ such that $u(t_{1},\cdot)\rho\in L^{1}(\Omega)$. Indeed,
suppose that the  contrary holds true. Then there exist $t_{1},~t_{2}\in (0,T)$ such that $t_{2}>t_{1}$ and for $i=1,2$ $u(t_{i},\cdot)\rho\in L^{1}(\Omega)$.\\
For a small $\eta>0$, set
\begin{eqnarray}
\Omega_{\eta}\colon=\{x\in \Omega\colon\,\delta(x):=dist(x,\Om^c)\geq\eta\}.
\end{eqnarray}
Note that $u(t_{i},x)\rho(x)\geq c>0$ when $x\in \Omega_{\eta}$ for some $c$ depending on $t_{i}$ and $\eta$.
Thus using Jensen's inequality twice we first get that $\ln(u(t_i,\cdot)\rho)\in L^1(\Omega_\eta)$ and then
\begin{equation}\label{int1}
\begin{gathered}
\int_{\Omega_{\eta}}\big\vert \ln\big(u(t_{i},x)\rho(x)\big)\big\vert ^{p}dx<\infty.
\end{gathered}
\end{equation}
Now we decompose the set $\Omega\setminus\Omega_\eta$ into:
\begin{eqnarray}
\Omega\setminus\Omega_{\eta}&=&S_{1}\cup S_{2}\\
&=&\{x\in \Omega_{\eta}^{c}\colon\,u(t_{i},x)\rho(x)\geq m\}
\cup \{x\in \Omega_{\eta}^{c}\colon\,u(t_{i},x)\rho(x)< m\}.
\end{eqnarray}
Observing that $\ln^{p} s$ is a concave function of $s$, for $s\geq e^{p-1}$, we choose  $m$ sufficiently large and apply Jensen's inequality on $S_{1}$, to derive that, for any $p>1$,
\begin{eqnarray}\label{int2}
\int_{S_{1}}|\ln\Big(u(t_{i},x)\rho(x)\Big)|^{p}\,dx&\leq&
C\ln^{p}\Big(\int_{S_{1}}u(t_{i},x)\rho(x)\,dx\Big)<\infty.
\end{eqnarray}
For $x\in S_{2}$, we have
\begin{eqnarray}
m>u(t_{i},x)\rho(x)&=&\Big[\int_{\Omega}p_{t_{i}}(x,y)u_{0}(y)dy\\
&+&\int_{0}^{t_{i}}\int_{\Omega}p_{t_i-s}(x,y)u(y,s)\,V\,dy\,ds\Big]\rho(x)\nonumber\\
&\geq&\int_{\Omega}p_{t_{i}}(x,y)u_{0}(y)dy \rho(x)\\
&\colon=&h(t_{i},x)\rho(x),\ i=1,2.
\end{eqnarray}
Thus
\begin{eqnarray}
\ln m\geq \ln\big(u(t_{i},x)\rho(x)\big)&=&\ln\big(\frac{u(t_{i},x)\rho(x)}{h(t_{i},x)
\rho(x)}\big)
+\ln\big(h(t_{i},x)\rho(x)\big)\nonumber\\
&\geq& \ln\big(h(t_{i},x)\rho(x)\big),
\end{eqnarray}
leading to the estimate
\begin{eqnarray}
|\ln(u(t_{i},x)\rho(x))|\leq \vert\ln m\vert + |\ln(h(t_{i},x)\rho(x))|,\ i=1,2.
\end{eqnarray}
As $\Om$ is a Lipschitz domain,  it is known  that (see \cite[p. 78]{bogdan-book}) there are constants $C>0,\gamma>0$ depending solely on $\Om$ and $\alpha$ such that
\begin{eqnarray}
\varphi_0\geq C\delta^{\gamma}.
\label{ground-distance}
\end{eqnarray}
Thus by Lemma \ref{comparability} together with the lower bound (\ref{ground-distance}), the $p$-integrability of the function $\ln[h(t_{i},x)\rho(x)]$ reduces to the $p$-integrability of $\ln\delta(x)$.\\
Let us recall the known fact (see \cite[Theorem 3.3]{delfour}) that the distance function $\delta$ is $a.e.$ differentiable and that $|\nabla\delta|=1,\ a.e.$ on $\Om$. Making use of the generalized coarea formula (see \cite[Theorem 1.2.6, p.5]{edmunds}), we obtain
\begin{eqnarray}
\int_{S_2}|\ln\delta(x)|^p|\nabla\delta(x)|\,dx &=& \int_{S_2}|\ln\delta(x)|^p\,dx=\int_0^R\int_{S_2\cap\{\delta=t\}}|\ln t|^p\,dH^{d-1}(x)\,dt\nonumber\\
&\leq& C\int_{0}^R |\ln t|^p\,dt<\infty,\ {\rm for\ any}\ p>1,
\end{eqnarray}
where $R$ and $C$ are  finite constants.
%
%
Therefore $\ln(u(t_i,\cdot)\rho)\in L^p(\Omega),\ i=1,2$.\\
Now we conclude that
\begin{eqnarray}
\ln\frac{u(t_{2},\cdot)}{u(t_{1},\cdot)}=\ln\Big(u(t_{2},\cdot)\rho(\cdot)\Big)
-\ln\Big(u(t_{1},\cdot)\rho(\cdot)\Big)\in L^{p}(\Omega),\ \forall\,p>1.
\end{eqnarray}
On the other hand it is well known that, being in the space $L^{p}(\Omega)$ for $p>d/{\alpha}$, the
function $\ln\frac{u(t_{2},\cdot)}{u(t_{1},\cdot)}$ is in fact in the Kato class and whence it satisfies the following: For
any $r>0$, there exists $C(r)>0$ such that
 \begin{equation}\label{kato-estimate}
\begin{gathered}
\frac{1}{t_{2}-t_{1}}\int_{\Omega}\ln\frac{u(t_{2},x)}{u(t_{1},x)}\Phi^{2}(x)\,dx\leq r\calE_{\Omega}[\Phi]
+ C(r)\int_{\Omega}\Phi^{2}(x)\,dx,\ \forall\,\Phi\in C_c^{\infty}(\Omega).
\end{gathered}
\end{equation}
Having inequality (\ref{log-sol}) in hands, we achieve
\begin{eqnarray}
\int_{\Omega} \Phi^{2}(x)V\,dx - {\mathcal E}_{\Omega}[\Phi]
\leq r{\mathcal E}_{\Omega}[\Phi] + C(r)\int_{\Omega}\Phi^{2}(x)\,dx,\ \forall\,\Phi\in C_c^\infty(\Om).
\end{eqnarray}
Therefore for every $\Phi\in C_c^\infty(\Om)$ such that $\int_\Omega\Phi^2\,dx=1$, we have
\begin{eqnarray}
\frac{-C(r)}{1+r}\leq {{\mathcal E}_{\Omega}[\Phi]-(1+ r)^{-1}\int_{\Omega} \Phi^{2}\,V\,dx}.
\end{eqnarray}
Whence
\begin{eqnarray}
\lambda_{0}^{(1+r)^{-1}V}> -\infty,\  \forall\,r>0,
\end{eqnarray}
which  contradicts the assumption of the theorem and the claim is finally proved.\\
Given $x\in \Omega$ and $t\in (0,T)$, we take $\rho=\rho(y)=p_{\frac{t}{2}}(x,y)$. Owing to the sharp estimate
of Lemma \ref{comparability} together with the lower bound (\ref{ground-distance}), we conclude
that $\ln \rho \in L^{p}(\Omega)$ as was the case for $h$.\\
On the other hand from the properties of the heat kernel  for the Dirichlet fractional Laplacian, we
have $\rho(y)>0$ for any $y\in\Omega$.\\
If there is no $s\in (0,\frac{t}{2}]$ such that $\rho(\cdot)u(s,\cdot)\in L^{1}(\Omega)$, then by using Duhamel's principle, we have
\begin{eqnarray}
u(t,x)&=& e^{-\frac{t}{2}L_0}u(\frac{t}{2},x) + \int_{\frac{t}{2}}^{t}\int_{\Omega}p_{s}(x,y)u(s,y)V(y)\,dy ds\nonumber\\
&\geq& e^{-\frac{t}{2}L_0}u(\frac{t}{2},x)\nonumber\\
&=&\int_{\Omega}p_{\frac{t}{2}}(x,y)u(\frac{t}{2},y)\,dy
=\int_{\Omega}\rho(y)u(\frac{t}{2},y)\,dy=\infty.
\end{eqnarray}
In case $s\in(0,\frac{t}{2}]$ is the only point such that $\rho(\cdot)u(s,\cdot)\in L^{1}(\Omega)$, making use
of Duhamel's principle once again, we obtain
\begin{eqnarray}
u\big(\frac{(t+s)}{2},x\big)\geq\int_{\Omega}p_{\frac{t}{2}}(x,y)u(\frac{s}{2},y)\,dy
=\int_{\Omega}\rho(y)u(\frac{s}{2},y)\,dy
=\infty.
\end{eqnarray}
From the intrinsic ultracontractivity property for the semigroup $e^{-tL_0}$, we derive that for
every $x\in\Omega$, every $r>0$ such that $B_r(x)\subset\Omega$, every $z\in B_r(x)$, and every
small $\gamma>0$, there is a constant $c=c(t,\gamma,r)$ such that
\begin{eqnarray}
p_{\frac{t+\gamma }{2}}(z,y)\geq c p_{\frac{t}{2}}(x,y),\ \forall\,y\in \Omega.
\end{eqnarray}
Indeed
\begin{eqnarray}
p_{\frac{t+\gamma }{2}}(z,y)&\sim& \varphi_0(z)\varphi_0(y)\geq \frac{\inf_{B_r(x)}\varphi_0}{\sup_{B_r(x)}\varphi_0}
\varphi_0(x)\varphi_0(y)\nonumber\\
&\sim&p_{\frac{t}{2}}(x,y).
\end{eqnarray}
Making use of the latter claim we achieve, for every $z\in B_r(x)$:
\begin{eqnarray}
u(\frac{t+s+\gamma }{2},z)&\geq&e^{-(\frac{t+\gamma }{2})L_{0}}u(\frac{s}{2},z)
=\int_{\Omega}p_{\frac{t+\gamma }{2}}(z,y)u(\frac{s}{2},y)\,dy\nonumber\\
&\geq& c\int_{\Omega}p_{\frac{t}{2}}(x,y)u(\frac{s}{2},y)\,dy
\geq c\int_{\Omega}\rho(y)u(\frac{s}{2},y)\,dy
=\infty.
\end{eqnarray}
By the semigroup property (or Duhamel's formula) once again, we obtain
\begin{eqnarray}
u(t,x)&\geq&\int_{\Omega}p_{\frac{t-s-\gamma }{2}}(x,z)u(\frac{t+ s+\gamma }{2},z)dz\nonumber\\
&\geq&\int_{B_r(x)}p_{\frac{t-s-\gamma }{2}}(x,z)u(\frac{t+ s+\gamma }{2},z)\,dz = \infty,
\end{eqnarray}
as $u(\frac{t+ s+\gamma }{2},z)=\infty$ on $B_r(x)$. Since $(t,x)$ is arbitrary, this proves the blow-up and the proof is finished.
\end{proof}
\section{Examples}
In this section we provide some examples that support the already developed theory. %
\begin{lem} Assume that there is $\lambda>1$ and a sequence of balls $B_k\subset\Omega
$ such that their Lebesgue volumes  $|B_k|\downarrow 0$ and a sequence $(\phi_k)\subset C_c^\infty(\Om)$ with
$Supp\, \phi_k\subset B_k,\ \int \phi_k^2(x)\,dx=1,\ \forall\,k$ such that
\begin{eqnarray}
\int \phi^2_k(x)\,V\,dx\geq\lambda\calE_\Omega[\phi_k],\ \forall\,k.
\end{eqnarray}
Then the heat equation (\ref{heat1}), has no nonnegative solution.
\label{condition}
\end{lem}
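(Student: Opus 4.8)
The plan is to reduce the statement to Theorem~\ref{necessary}: it suffices to produce an $\epsilon>0$ for which $\lambda_0^{(1-\epsilon)V}=-\infty$. Since $\lambda>1$ I would fix $\epsilon\in\bigl(0,\,1-\lambda^{-1}\bigr)$ and set $\mu:=(1-\epsilon)\lambda>1$. The given $\phi_k$ are admissible test functions in the infimum defining $\lambda_0^{(1-\epsilon)V}$ (they lie in $C_c^\infty(\Om)$ with $\int\phi_k^2\,dx=1$), and, using $\calE_\Om[\phi_k]\geq 0$ together with the hypothesis $\int\phi_k^2\,V\,dx\geq\lambda\,\calE_\Om[\phi_k]$,
\[
\calE_\Om[\phi_k]-(1-\epsilon)\int_\Om\phi_k^2\,V\,dx\ \leq\ (1-\mu)\,\calE_\Om[\phi_k],
\qquad 1-\mu<0 .
\]
Thus the whole matter comes down to showing that $\calE_\Om[\phi_k]\to\infty$ as $k\to\infty$.

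For this step I would use that $\phi_k$ carries $L^2$-mass $1$ while being supported in the shrinking ball $B_k$. Recall that for $\phi\in C_c^\infty(\Om)$ one has $\calE_\Om[\phi]=\calE[\phi]$ (the term $\int_\Om\phi^2\kappa_\Om\,dx$ accounts exactly for the part of the Gagliardo double integral over $\Om\times\Om^c$, since $\phi=0$ on $\Om^c$), and, by the fractional Sobolev inequality (valid because $0<\alpha<d$, via \eqref{form-fourier}), there is $C_S=C_S(d,\alpha)>0$ with $\|\phi\|_{L^{2d/(d-\alpha)}}^2\leq C_S\,\calE[\phi]$. Combining this with H\"older's inequality on $B_k$,
\[
1=\int_{B_k}\phi_k^2\,dx\ \leq\ \|\phi_k\|_{L^{2d/(d-\alpha)}}^2\,|B_k|^{\alpha/d}\ \leq\ C_S\,|B_k|^{\alpha/d}\,\calE_\Om[\phi_k],
\]
so that $\calE_\Om[\phi_k]\geq C_S^{-1}|B_k|^{-\alpha/d}\to\infty$, because $|B_k|\downarrow0$.

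Feeding this back into the first display gives $\lambda_0^{(1-\epsilon)V}\leq(1-\mu)\,\calE_\Om[\phi_k]\to-\infty$, hence $\lambda_0^{(1-\epsilon)V}=-\infty$, and Theorem~\ref{necessary} then yields that the heat equation~\eqref{heat1} admits no nonnegative solution (and in fact that every nonnegative solution blows up instantaneously). The only genuinely substantive point is the divergence of the energies $\calE_\Om[\phi_k]$; I expect this to be the main obstacle, although it is standard and is handled precisely as above through the fractional Sobolev embedding combined with $|B_k|\to0$. Everything else is routine bookkeeping with the definition of $\lambda_0^{(1-\epsilon)V}$ and an appeal to Theorem~\ref{necessary}.
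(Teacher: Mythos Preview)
Your proof is correct and follows essentially the same route as the paper: fix $\epsilon\in(0,1)$ with $(1-\epsilon)\lambda>1$, use the hypothesis to bound $\calE_\Om[\phi_k]-(1-\epsilon)\int\phi_k^2 V\,dx$ above by a negative multiple of $\calE_\Om[\phi_k]$, show $\calE_\Om[\phi_k]\geq c|B_k|^{-\alpha/d}\to\infty$, and invoke Theorem~\ref{necessary}. The only cosmetic difference is that the paper phrases the intermediate step via $\lambda_0^{(1-\epsilon)V}(B_k)$ and simply asserts the energy lower bound $\calE_\Om[\phi_k]\geq c|B_k|^{-\alpha/d}$, whereas you derive it explicitly from the fractional Sobolev inequality and H\"older; your version is slightly more detailed at that point but the argument is the same.
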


\begin{proof} As a consequence of the condition given in the lemma, there is $\lambda'>1$ and $\epsilon\in (0,1)$ such that

\begin{eqnarray}
(1-\epsilon)\int \phi^2_k(x)\,V\,dx\geq\lambda'\calE_\Omega[\phi_k],\ \forall\,k.
\end{eqnarray}
Thus
\begin{eqnarray}
-\calE_{B_k}[\phi_k] + (1-\epsilon)\int \phi_k^2(x)\,V\,dx\geq(\lambda'-1)\calE_\Omega[\phi_k]\geq c|B_k|^{-\alpha/d}
,\ \forall\,k.
\end{eqnarray}
Hence $\lambda_0^{(1-\epsilon)V}(B_k)\leq -c|B_k|^{-\alpha/d}\to -\infty$, as $k\to\infty$. Now observing that
\begin{eqnarray}
\lambda_0^{(1-\epsilon)V}(B_k)\geq \lambda_0^{(1-\epsilon)V},
\end{eqnarray}
yields the result.
\end{proof}
\begin{exa}{\rm {\em Hardy potential with interior singularity}.\\
Let $\alpha<\min(2,d)$, and $\Om\subset\R^d$ an open bounded subset with Lipschitz boundary and containing $0$.
Set $V_c(x)=\frac{c}{|x|^\alpha},\ x\neq 0$ and $c\geq 0$. If
$0\leq c\leq c^*\colon=\frac{2^\alpha\Gamma^2(\frac{d+\alpha}{4})}
{ \Gamma^2(\frac{d-\alpha}{4})}$, then the heat equation associated to $L_{V_c}$ has  a nonnegative solution
(owing to Hardy's inequality).\\
However if $c>c^*$ then the heat equation has no nonnegative  solution. Indeed, owing to the sharpness of
the Hardy's inequality
\begin{eqnarray}
\int_\Om \frac{f^2(x)}{|x|^\alpha}\,dx\leq \frac{1}{c^*}\calE_\Om[f],\ \forall\,f\in W_0^{\alpha/2,2}(\Omega),
\label{hardy}
\end{eqnarray}
there is $\lambda>1$ and a function $\phi\in C_c^\infty(\Om)$  such that
\begin{eqnarray}
\int \phi^2(x)V(x)\,dx\geq\lambda\calE_\Omega[\phi].
\end{eqnarray}
By a scaling $x=\kappa x'$, we may assume that $supp\ \phi\subset B_R(0)$. Now an elementary computation shows that the sequence $\psi_k$ defined by $\psi_k(x)=\phi(kx)$ and $B_k:=B_{R/k}$
 fulfills the conditions of Lemma \ref{condition}.\\
Let us emphasize at this stage,  that while for $0\leq c<c^*$, $u(t)\in W_0^{\alpha/2,2}(\Om),\ \forall\,t>0$, it is not
the case for
the critical constant $c^*$. In fact, according to \cite[Theorem 4.2]{benamor-JPA} in the critical case we have
\begin{eqnarray}
 u(t)\sim |\cdot|^{-\frac{d-\alpha}{2}}\delta^{\alpha/2},\ {\rm for\ large}\ t,\ {\rm where}\ \delta(x)=dist(x,\Omega^c).
\end{eqnarray}
Using Hardy's inequality once again we recognize that $|\cdot|^{-\frac{d-\alpha}{2}}\delta^{\alpha/2}\notin
W_0^{\alpha/2,2}(\Om)$.
}
\label{HP-local}
\end{exa}
\begin{exa}{\rm {\em Hardy potential with boundary singularity}.\\
Assume that the following Hardy's inequality holds true
\begin{eqnarray}
\int \frac{f^2(x)}{\delta^{\alpha}(x)}\,dx\leq\frac{1}{\kappa}\calE_\Om[f],\ \forall\,f\in W_0^{\alpha/2,2}(\Omega),
\label{H-distance}
\end{eqnarray}
with sharp constant $1/\kappa^*$. Take $V_\kappa=\frac{\kappa}{\delta^{\alpha}(x)},\ \kappa\geq 0$. Arguing as in Example \ref{HP-local}, we conclude that for $\kappa>\kappa^*$, the related heat equation has no nonnegative solution, whereas it has for $\kappa\leq\kappa^*$.\\
According to \cite[Corollary 2.4]{chen-song}, inequality (\ref{H-distance}) is satisfied if $d\geq 2$ and $\alp\neq 1$.\\
Let us finally quote that the connection between Kato inequality and existence as well as nonexistence of positive solutions for the Laplacian on the half-space  with boundary singularity was discussed by Ishige--Ishiwata in \cite{ishige}. Similar results to our's were discovered in that paper.

}
\end{exa}

\bibliography{biblio-HeatFDL}

\end{document}